	\setlist[enumerate,1]{label=(\roman*), font=\normalfont}
\newcolumntype{L}{>{$}l<{$}} 
\newcolumntype{C}{>{$}c<{$}} 
\theoremstyle{plain}
\newtheorem{theorem}{Theorem}[section]
\newtheorem{lemma}[theorem]{Lemma}
\newtheorem{proposition}[theorem]{Proposition}
\newtheorem{corollary}[theorem]{Corollary}
\newcounter{CountQuestions}
\theoremstyle{definition}
\newtheorem{definition}[theorem]{Definition}
\newtheorem{remark}[theorem]{Remark}
\newtheorem{example}[theorem]{Example}
\newcommand{\goto}{\rightarrow}
\newcommand{\abs}[1]{\left|{#1}\right|}
\def\quotient#1#2{%
    \raise1ex\hbox{$#1$}\Big/\lower1ex\hbox{$#2$}%
}
\DeclareMathOperator{\chrst}{char}
\DeclareMathOperator{\ndeg}{ndeg}
\DeclareMathOperator{\spn}{span}
\newcommand{\Ort}{\mathop{\large{ \mathlarger{\ort}}}}
\newcommand{\uv}[1]{``{#1}"}
\newcommand{\F}{\mathbb{F}}
\newcommand{\mB}{\mathcal{B}}
\newcommand{\simsim}{\stackrel{{\scriptsize{\mathrm{sim}}}}{\sim}}
\newcommand{\iql}[1]{\mathfrak{i}_{\mathrm{d}}(#1)}
\newcommand{\an}{\mathrm{an}}
\newcommand{\SP}[1]{\mathrm{SP}(#1)}
\newcommand{\ssp}[1]{\mathrm{sSP}(#1)}
\newcommand{\fullsplitpat}[1]{\mathrm{fSP}(#1)}
\newcommand{\fsp}[1]{\mathrm{fSP}(#1)}
\newcommand{\pisp}[1]{\mathrm{piSP}(#1)}
\newcommand{\ort}{\:\bot\:}
\newcommand{\sqf}[1]{\langle #1 \rangle} 
\newcommand{\pf}[1]{\langle\!\langle #1 \rangle\!\rangle} 
\newcommand{\nf}{\hat{\nu}} 
\begin{document}
\title[Isotropy and full splitting pattern of quasilinear $p$-forms]{Isotropy and full splitting pattern of quasilinear $\boldsymbol{p}$-forms}
\author{Krist\'yna Zemkov\'a}
\address{Fakult\"at f\"ur Mathematik, Technische Universit\"at Dortmund, D-44221 Dortmund,
Germany}
\address{Department of Mathematics and Statistics, University of Victoria, Victoria BC V8W 2Y2, Canada}
\email{zemk.kr@gmail.com}%
\date{\today}
\subjclass[2020]{11E04, 11E81}
\keywords{Quasilinear $p$-forms, Quadratic forms, Finite characteristic, Isotropy, Full splitting pattern}%

\begin{abstract}
For a quasilinear $p$-form defined over a field $F$ of characteristic ${p>0}$, we prove that its defect over the field ${F(\!\sqrt[p^{n_1}]{a_1}, \dots, \sqrt[p^{n_r}]{a_r})}$ is equal to its defect over the field ${F(\!\sqrt[p]{a_1}, \dots, \sqrt[p]{a_r})}$,  strengthening a result of Hoffmann from 2004. We also compute the full splitting pattern of some families of quasilinear $p$-forms.
\end{abstract}
\thanks{This work was supported by DFG project HO 4784/2-1. The author further acknowledges support from the Pacific Institute for the Mathematical Sciences and a partial support from the National Science and Engineering Research Council of Canada. The research and findings may not reflect those of these institutions.\\ \indent The author reports there are no competing interests to declare.}
\maketitle



\section{Introduction}

Let $F$ be a field and $\varphi$ a quadratic form over $F$. Then the splitting pattern $\SP{\varphi}$ of $\varphi$ can be defined as the set of dimensions realized by the anisotropic part of $\varphi$ over all possible field extensions of $F$. In case of fields of characteristic different from 2, the splitting pattern was first systematically studied by Knebusch in \cite{Kne76}; Knebusch presented an explicit tower of fields $F_0\subseteq F_1 \subseteq \dots \subseteq F_h$ (depending on $\varphi$), nowadays called the \emph{standard splitting tower}, that has a generic property: $m\in\SP{\varphi}$ if and only if $m$ is the dimension of the anisotropic part of $\varphi$ over one of the fields $F_k$ in the standard splitting tower.

We can also consider an analogous standard splitting tower over fields of characteristic 2. However, in charateristic 2, we have to consider several different types of quadratic forms: \emph{nonsingular}, \emph{totally singular}, and \emph{singular} (the latter being the general case). For nonsingular quadratic forms, the standard splitting tower has been defined by Knebusch \cite{Kne77}, and it has been extended to the general case by Laghribi \cite{Lag02-gensplit}. Unfortunately, the generic behavior of the standard splitting tower is guaranteed only in the case of nonsingular quadratic forms. In fact, Hoffmann and Laghribi \cite{HL04} proved that if $\varphi$ is a nonsingular quadratic form over a field of characteristic $2$, then the standard splitting tower provides all values from the set $\SP{\varphi}$, but the same does not have to hold for singular quadratic forms. More precisely, for (singular) quadratic forms over fields of characteristic 2, we distinguish between \emph{Witt index} of $\varphi$ -- the maximal number of hyperbolic planes split off by $\varphi$ -- and the \emph{defect} of $\varphi$ -- the maximal number of $\sqf{0}$'s split off by (the quasilinear part of) $\varphi$. Hoffmann and Laghribi proved that the standard splitting tower gives all possible Witt indices  (\cite[Prop.~4.6]{HL04}), but not all possible defects  (\cite[Ex.~8.15]{HL04}). Therefore, for a quadratic form $\varphi$ defined over a field of characteristic $2$, we distinguish between the \emph{standard splitting pattern} of $\varphi$ (denoted $\ssp{\varphi}$) -- the set of the dimensions of the anisotropic parts of $\varphi$ over the fields in the standard splitting tower -- and the \emph{full splitting pattern} of $\varphi$ (denoted $\fsp{\varphi}$) -- the set of the dimensions of the anisotropic parts of $\varphi$ over all field extensions. To understand the full splitting pattern of a singular quadratic form, one has to first understand the splitting behavior of its quasilinear part; hence, the starting point is to examine the full splitting pattern of totally singular quadratic forms.

As such, the study of splitting patterns of quadratic forms is closely related to the study of their isotropy behavior over different field extensions. For some results in characteristic 2, see \cite{Lag04, HL06, Hof22, KZ22, LagMuk23}.

The aim of this note is to extend the knowledge on isotropy and full splitting patterns of totally singular quadratic forms. However, we chose to prove our results in a more general setting of quasilinear $p$-forms, as they are a generalization of the concept of totally singular quadratic forms over fields of characteristic $2$: let $p$ be any prime integer, and let $F$ be a field of characteristic $p$. A \emph{quasilinear $p$-form} can be expressed as a \uv{diagonal} homogeneous polynomial of degree $p$ with coefficients in $F$, i.e., as $a_1X_1^p+\dots+a_nX_n^p$ with $a_1,\dots,a_n\in F$ and $X_1,\dots,X_n$ variables; we denote it by $\sqf{a_1,\dots,a_n}$ (in an analogy to the totally singular quadratic forms). Many results (and proofs) on totally singular quadratic forms translate directly to quasilinear $p$-forms, but some need a special care because of the increased complexity of the forms. (For example: 1-fold \emph{quasi-Pfister form} is defined as $\pf{a}=\sqf{1,a,\dots,a^{p-1}}$. These forms are easy to work with when $p=2$, i.e., in the case of totally singular quadratic forms, but it can get more complicated when $p>2$.) Quasilinear $p$-forms have been studied first by Hoffmann \cite{Hof04} and subsequently by Scully \cite{Scu13, Scu16-Hoff, Scu16-Split}. In particular, \cite{Scu16-Split} analyzes the standard splitting pattern of quasilinear $p$-forms. Moreover, note that quasilinear $p$-forms represent $F^p$-subspaces of $F$; as such, they can be used as tools in other areas (see, e.g., \cite{Chap22}). 

\bigskip

In this article, we focus on two topics that are closely related. In the first part of the article,  Section~\ref{Sec:isotropy}, we study the defect of quasilinear $p$-forms over purely inseparable field extensions. The \uv{complexity} of a purely inseparable field extension $E$ of a field $F$ depends on its \emph{exponent} -- the smallest integer $e$ (if it exists) such that $E^{p^e}\subseteq F$. It turns out that to fully describe the defects over any purely inseparable field extensions, we only need to understand the defects over purely inseparable extensions of exponent one. More precisely, we will prove a stronger version of Hoffmann's theorem \cite[Th.~5.9]{Hof04}:

\begin{theorem} [cf. Theorem~{\ref{Th:IsotropyInsepExtKZ_pforms}}]
Let $r\geq1$. For each $1\leq i \leq r$, let $a_i\in F$, $n_i\geq1$, and $\alpha_i$, $\beta_i$ be such that $\alpha_i^p=\beta_i^{p^{n_i}}=a_i$. Furthermore, set $K=F(\alpha_1,\dots,\alpha_r)$,  $L=F(\beta_1,\dots,\beta_r)$ and $\pi=\pf{a_1,\dots,a_r}$. Let $\varphi$ be an anisotropic quasilinear $p$-form. Then
\begin{enumerate}
	\item $\iql{\varphi_K}=\iql{\varphi_L}$,
	\item $\iql{\varphi_K}=\frac{1}{p^r}\iql{\varphi\otimes\pi}$ if $\pi$ is anisotropic.
\end{enumerate}
\end{theorem}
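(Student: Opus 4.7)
Since $\alpha_i=\beta_i^{p^{n_i-1}}\in L$, we have $K\subseteq L$, yielding the trivial inequality $\iql{\varphi_K}\leq\iql{\varphi_L}$. Part~(ii) is essentially Hoffmann's theorem~\cite[Th.~5.9]{Hof04}, and I will use it as input; the entire content of (i) is then the reverse inequality, which I prove under the hypothesis that $\pi$ is anisotropic (this also matches the hypothesis of (ii)). Under this assumption the monomials $\beta^I=\beta_1^{i_1}\cdots\beta_r^{i_r}$ with $0\leq i_k<p^{n_k}$ form an $F$-basis of $L$, so $[L:F]=\prod_k p^{n_k}$.

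The key computation transforms an isotropy relation of $\varphi_L$ into many isotropy relations of $(\varphi\otimes\pi)_F$. For each admissible $I$, write $p i_k=q_k p^{n_k}+r_k$ with $0\leq q_k<p$ and $0\leq r_k<p^{n_k}$ (automatically $p\mid r_k$); this yields a bijection $I\longleftrightarrow(Q,R)$, and the identities $\beta_k^{p^{n_k}}=a_k$ give $\beta^{pI}=a^Q\beta^R$. Writing $y_j=\sum_I b_{j,I}\beta^I$ with $b_{j,I}\in F$, the equation $\sum_j x_j y_j^p=0$ expands to
\[
\sum_R\beta^R\Bigl(\sum_Q a^Q\sum_j x_j b_{j,I(Q,R)}^p\Bigr)=0 \quad\text{in } L.
\]
Each inner sum lies in $F$ and the $\beta^R$ are distinct members of the $F$-basis of $L$, so each inner sum vanishes in $F$. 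This says exactly that $z^R:=(b_{j,I(Q,R)})_{Q,j}$ is an isotropic vector of $\varphi\otimes\pi=\sqf{a^Q x_j}$ over $F$.

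The assignment $(y_j)\mapsto(z^R)_R$ is therefore an $F$-linear injection
\[
\Phi\colon \mathrm{Iso}(\varphi_L)\hookrightarrow\bigoplus_R \mathrm{Iso}((\varphi\otimes\pi)_F),
\]
where $\mathrm{Iso}(\psi_E):=\{y\in E^n:\sum x_j y_j^p=0\}$; injectivity is immediate from the uniqueness of the $F$-basis expansion of each $y_j$. A direct argument with the Frobenius-semilinear evaluation $E^n\to\spn_{E^p}(x_1,\dots,x_n)$, $(y_j)\mapsto\sum x_j y_j^p$, identifies $\dim_E\mathrm{Iso}(\psi_E)=\iql{\psi_E}$. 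Counting $F$-dimensions, using $|R|=\prod_k p^{n_k-1}$ and $[L:F]=\prod_k p^{n_k}$, injectivity of $\Phi$ gives
\[
p^r\,\iql{\varphi_L}\leq\iql{\varphi\otimes\pi}_F = p^r\,\iql{\varphi_K},
\]
where the equality is part~(ii). Hence $\iql{\varphi_L}\leq\iql{\varphi_K}$, finishing (i). The main obstacle is the bookkeeping around the bijection $I\leftrightarrow(Q,R)$ and the decomposition $\beta^{pI}=a^Q\beta^R$; both rely crucially on the $F$-basis $\{\beta^I\}$ of $L$, which in turn rests on the anisotropy of $\pi$. The isotropic-$\pi$ case falls outside this plan and would require a preliminary reduction to a maximal anisotropic sub-Pfister form of $\pi$ together with a separate analysis of the additional inseparable generators of $L$ so produced.
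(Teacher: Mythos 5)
Your argument is correct when $\pi$ is anisotropic, but the route is genuinely different from the paper's. The paper proceeds by induction on $r$, reducing to $r=1$; there it identifies the maximal isotropic subspace $W_L$ of $\varphi_L$ directly, proving $W_L = L\otimes_K W_K$ by writing $w\in W_L$ over the $K$-basis $\{1,\beta,\dots,\beta^{p^{n-1}-1}\}$ of $L$, using that $\varphi_K(v)\in F$ for all $v\in V_K$ (a consequence of Lemma~\ref{Lemma:p-forms_BasicIsotropy}(i)) and that $1,\beta^p,\beta^{2p},\dots$ remain $F$-linearly independent. Crucially the paper never invokes (ii) in proving (i). You instead combine the trivial inequality $\iql{\varphi_K}\le\iql{\varphi_L}$ with a dimension count against $\iql{\varphi\otimes\pi}$ via the injection $\Phi$, and then close the loop with (ii). In fact your $\Phi$ is a bijection onto $\bigoplus_R W_{\varphi\otimes\pi}$: running the same bookkeeping backwards, any tuple $(z^R)_R$ of isotropic vectors of $\varphi\otimes\pi$ reassembles to an isotropic vector of $\varphi_L$, so your computation really yields the equality $p^r\,\iql{\varphi_L}=\iql{\varphi\otimes\pi}$ on the nose. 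That gives (i) and (ii) in one stroke, without citing Hoffmann as a black box; it would have been worth noting.

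The gap you flag is more serious than ``bookkeeping over additional inseparable generators'': without an anisotropy hypothesis on $\pi$, part (i) as stated is actually false. Take $r=1$, pick $c\in F\setminus F^p$, set $a_1=c^p$, $n_1=2$, and $\varphi=\sqf{1,c}$. Then $\alpha_1=c$, so $K=F$ and $L=F(\sqrt[p]{c})$; the form $\varphi_K=\varphi$ is anisotropic, while $\varphi_L$ is isotropic because $c\in L^p=F^p(c)$, so $\iql{\varphi_K}=0\neq 1=\iql{\varphi_L}$. The paper's $r=1$ argument silently requires $a\notin F^p$ so that $\{1,\beta,\dots,\beta^{p^n-1}\}$ is an $F$-basis of $L$, and in the induction step this assumption must be checked over each intermediate field; it holds whenever $\pi$ is anisotropic but can fail otherwise, exactly as in this example. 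So do not try to push your argument through the isotropic-$\pi$ case: the correct fix is to make the hypothesis that $\pi$ is anisotropic (equivalently $[K:F]=p^r$) explicit in part (i) as well, and with that hypothesis your proof is complete.
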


In the second part of the article, Section~\ref{Sec:fsp}, we look directly at the full splitting pattern of quasilinear $p$-forms. There are two obvious bounds on the size of the set $\fsp{\varphi}$: The upper bound is given by the dimension of $\varphi$. On the other hand, the inclusion $\ssp{\varphi}\subseteq\fsp{\varphi}$ gives a lower bound $b:=\abs{\ssp{\varphi}}$ on $\abs{\fsp{\varphi}}$; in Corollary~\ref{Cor:FSPlowerBound}, we provide an alternative proof of $b\leq\abs{\fsp{\varphi}}$ by using purely inseparable field extensions instead of the standard splitting tower. In examples~\ref{Ex:fspMin} and \ref{Ex:fspPF}, we compute the full splitting patterns of the so-called \emph{minimal} $p$-forms and of \emph{quasi-Pfister forms}, showing that both of the above mentioned bounds are optimal. After that we focus on some quasilinear $p$-forms with nontrivial full splitting pattern, namely on a subfamily of quasi-Pfister neighbors:

\begin{theorem} [cf. Theorem~\ref{Th:fullsplitpatSPNmin_pforms}]
Let $\pi$ be an $n$-fold quasi-Pfister form over $F$, $\sigma$ be a minimal subform of $\pi$ of dimension at least $2$ and $d\in F^*$ be such that the quasilinear $p$-form $\varphi\simeq\pi\ort d\sigma$ is anisotropic. Then $m\in \fullsplitpat{\varphi}$ if and only if $m=p^k+\ell$ and one of the following holds:
\begin{enumerate}
	\item $0\leq k\leq n$ and $\ell=0$;
	\item $0\leq k\leq n$ and $\max\{1, k-n+\dim\sigma\}\leq \ell\leq \min\{\dim\sigma,p^k\}$.
\end{enumerate}
\end{theorem}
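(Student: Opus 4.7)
I will prove the two directions separately, using Theorem~\ref{Th:IsotropyInsepExtKZ_pforms} from Section~\ref{Sec:isotropy} as the principal tool. Write $\pi=\pf{a_1,\dots,a_n}$ and $s=\dim\sigma$, so that $\dim\varphi=p^n+s$. Note that the anisotropic part $(\pi_E)_{\mathrm{an}}$ of a quasi-Pfister form over any extension $E/F$ is again a quasi-Pfister form, hence of dimension $p^k$ for some $0\leq k\leq n$; this already forces every $m\in\fullsplitpat{\varphi}$ to be of the shape $p^k+\ell$, explaining the parametrization in the statement.

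For the realization direction, the plan is to exhibit an explicit extension $E/F$ for each allowed pair $(k,\ell)$. The basic building block is the purely inseparable extension $E_k=F(\sqrt[p]{a_1},\dots,\sqrt[p]{a_{n-k}})$, which reduces $\pi$ to an anisotropic $k$-fold quasi-Pfister form; part~(ii) of Theorem~\ref{Th:IsotropyInsepExtKZ_pforms} lets one read off $\iql{\varphi_{E_k}}$ from $\iql{\varphi\otimes\pi}$. To realize the intermediate values of $\ell$ from case~(ii), I will further compose with function fields of suitably chosen subforms of $d\sigma$ (or of $\varphi$ itself). To realize case~(i) (i.e.\ $\ell=0$), one needs $d\in D(\pi_E)$, so that $d\sigma_E$ is fully absorbed into $(\pi_E)_{\mathrm{an}}$; this can be arranged by adjoining a $p$-th root of $d/c$ for a well-chosen $c\in D(\pi)$, followed if necessary by the function field of $\pi$ itself.

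For the obstruction direction, by Knebusch-type generic splitting together with Theorem~\ref{Th:IsotropyInsepExtKZ_pforms}(i) it suffices to consider field extensions built by alternating purely inseparable extensions of exponent one with function fields of quasilinear $p$-forms. Over any such $E$, $(\pi_E)_{\mathrm{an}}$ is a $k$-fold quasi-Pfister form for some $k\leq n$ as above. The upper bound $\ell\leq\min\{s,p^k\}$ follows from two observations: $d\sigma$ contributes at most $s$ dimensions in total, and, by roundness of quasi-Pfister forms, whenever $d\in D(\pi_E)$ the form $d\sigma_E$ embeds (up to isomorphism) into $(\pi_E)_{\mathrm{an}}$, forcing $\dim(\varphi_E)_{\mathrm{an}}\leq 2p^k$.

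The crux is the sharp lower bound $\ell\geq\max\{1,k-n+s\}$, which is where I expect the main obstacle to lie. Reducing $\pi$ from dimension $p^n$ to dimension $p^k$ requires, in an appropriate sense, $n-k$ independent $p$-th root extractions; the minimality hypothesis on $\sigma$ prevents $\sigma_E$ from collapsing faster than one dimension per such extraction, forcing $\dim(\sigma_E)_{\mathrm{an}}\geq s-(n-k)$ whenever this quantity is positive. I plan to prove this by applying Theorem~\ref{Th:IsotropyInsepExtKZ_pforms}(ii) to convert defects over purely inseparable extensions into values of $\iql{\varphi\otimes\pi}$, and then exploiting the structural theory of minimal subforms of quasi-Pfister forms (Example~\ref{Ex:fspMin} and earlier work of Hoffmann and Scully) to pin down $\dim(\sigma_E)_{\mathrm{an}}$ exactly. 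Without minimality of $\sigma$ this bound would fail, so the hypothesis is essential rather than cosmetic.
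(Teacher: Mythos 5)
Your proposal is directionally aligned with the paper in some respects (using purely inseparable extensions, invoking Theorem~\ref{Th:IsotropyInsepExtKZ_pforms}, and the observation that $\dim(\pi_E)_{\mathrm{an}}$ is always a $p$-power), but it has two serious gaps, the first of which is the heart of the matter.

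\textbf{The realization direction is not addressed where it is hardest.} Writing $\sigma\simeq\sqf{1,a_1,\dots,a_s}$ and $\pi\simeq\pf{a_1,\dots,a_n}$, if you pass to an extension $E=F(\sqrt[p]{a_{i_1}},\dots,\sqrt[p]{a_{i_{n-k}}})$ obtained by adjoining $p$-th roots of a subset of the $a_i$'s, then $(\pi_E)_{\mathrm{an}}$ is the $k$-fold quasi-Pfister form on the complementary $a_i$'s, and $(\sigma_E)_{\mathrm{an}}$ is obtained by deleting from $\sqf{1,a_1,\dots,a_s}$ the $a_i$'s in the chosen subset; hence $\dim(\sigma_E)_{\mathrm{an}}\leq k+1$. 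But the theorem asserts realizability of $\ell$ up to $\min\{\dim\sigma,p^k\}$, which strictly exceeds $k+1$ whenever $p^k>k+1$ (e.g.\ $p=2,k=2$ or $p\geq3,k=1$). The paper's proof overcomes this with a genuinely nontrivial construction: fields $G_{k,\ell}$ obtained by adjoining $p$-th roots of \emph{ratios} $a^\lambda/a_{k+j}$ where $a^\lambda$ is a suitable monomial in $a_1,\dots,a_k$, which keeps $\{a_1,\dots,a_k\}$ $p$-independent while forcing $a_{k+1},\dots,a_{\ell-1}$ into $D_{G_{k,\ell}}(\pf{a_1,\dots,a_k})$. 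Your fallback of ``compose with function fields of suitably chosen subforms'' will not obviously work here: passing to a function field drops the norm degree of $\pi$ along with that of $\sigma$, so it gives no independent control over $\ell$ for fixed $k$. You would need to say much more to make this go through, and I doubt it can.

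\textbf{The obstruction direction misuses the tools and relies on an unjustified reduction.} You claim that ``by Knebusch-type generic splitting together with Theorem~\ref{Th:IsotropyInsepExtKZ_pforms}(i) it suffices to consider field extensions built by alternating purely inseparable extensions of exponent one with function fields.'' This is not a result available in the paper; indeed, the whole point of the introduction is that the standard (Knebusch) splitting tower does \emph{not} capture the full splitting pattern for quasilinear forms, so one cannot reduce to tower-like extensions a priori. The paper instead proves the constraints $\ell\geq k-n+\dim\sigma$ and $\ell\leq p^k$ directly over an arbitrary extension $E/F$ via Lemma~\ref{Lemma:SplitPatMinSubform_pforms}, using the structural fact (Hoffmann, Prop.~5.2 of \cite{Hof04}) that $(\pi_E)_{\mathrm{an}}$ is a quasi-Pfister form on a subset of the original generators, and then counting which of $a_1,\dots,a_s$ survive. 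Your ``one dimension per extraction'' heuristic is the right intuition but not a proof, and Theorem~\ref{Th:IsotropyInsepExtKZ_pforms}(ii) cannot be applied over general $E$ since its hypothesis requires the auxiliary quasi-Pfister form to be anisotropic. Also note that your claimed dichotomy for the upper bound is off: if $d\in D_E(\pi)$ then by Lemma~\ref{Lemma:IsotropyIndicesSPN_pforms}(i) you get $\ell=0$, not merely $\ell\leq p^k$; the bound $\ell\leq p^k$ arises in the complementary case $d\notin D_E(\pi)$, from $\sigma\subseteq\pi$.
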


We conclude the paper by Example~\ref{Ex:FullSplitPattSPN_pforms}, in which we apply the previous theorem on a quasilinear $p$-form of dimension $p^4+4$; we compute its full splitting pattern and explicitly provide all the field extensions that we use for that.

\bigskip

This paper is based on the second chapter of the author's PhD thesis \cite{KZdis}.


\section{Preliminaries}\label{Sec:Prel_p-forms}

All fields in this article are of characteristic $p>0$.


\subsection{Quasilinear $p$-forms} Quasilinear $p$-forms are a generalization of totally singular quadratic forms (defined over fields of characteristic $2$). For totally singular quadratic forms, see, e.g., \cite[Sec.~8]{HL04}; for a detailed introduction into quasilinear $p$-forms in general, see \cite{Hof04}.

\begin{definition}
Let $F$ be a field and $V$ a finite-dimensional vector space over $F$. A \emph{quasilinear $p$-form} (or simply a \emph{$p$-form}) over $F$ is a map ${\varphi:V\goto F}$ with the following properties:
\begin{enumerate}[(1)]
	\item $\varphi(av)=a^p\varphi(v)$ for any $a\in F$ and $v\in V$, \label{Enum:Defpform1}
	\item $\varphi(v+w)=\varphi(v)+\varphi(w)$ for any $v,w\in V$. \label{Enum:Defpform2}
\end{enumerate}
We define the dimension of $\varphi$ as $\dim\varphi=\dim V$. 
\end{definition}

\bigskip

Let $V$ be an $F$-vector space with a basis $\{e_1,\dots,e_n\}$, and $\varphi$ a $p$-form on $V$. Let $a_i=\varphi(e_i)$; then, for a vector $v=\sum_{i=1}^nx_ie_i\in V$, we have
\[\varphi(v)=\sum_{i=1}^na_ix_i^p.\]
Thus we can associate $\varphi$ with the \uv{diagonal} homogeneous polynomial $\sum_{i=1}^na_iX_i^p\in F[X_1,\dots,X_n]$. We denote such a $p$-form by $\sqf{a_1,\dots,a_n}$.

\bigskip

Let $\sqf{a_1,\dots,a_n}$ and $\sqf{b_1\dots,b_m}$ be $p$-forms over $F$ . Then  we define the orthogonal sum and the tensor product of these two $p$-forms in the obvious way:
\begin{align*}
\sqf{a_1,\dots,a_n}\ort\sqf{b_1\dots,b_m}&=\sqf{a_1,\dots,a_n,b_1\dots,b_m},\\
\sqf{a_1,\dots,a_n}\otimes\sqf{b_1\dots,b_m}&=a_1\sqf{b_1\dots,b_m}\ort\dots\ort a_n\sqf{b_1\dots,b_m}.
\end{align*}
Moreover, for $c\in F^*$, we have $c\sqf{a_1,\dots,a_n}=\sqf{ca_1,\dots,ca_n}$, and if $k$ is a positive integer, then we write $k\times\varphi$ for the $p$-form $\varphi\ort\dots\ort\varphi$ ($k$ copies).

Let $\varphi:V\goto F$ and $\psi:W\goto F$ be two $p$-forms and $f:V\goto W$ a vector space homomorphism satisfying $\varphi(v)=\psi(f(v))$ for any $v\in V$. If $f$ is bijective, then we call $\varphi$ and $\psi$ \emph{isometric}, and write $\varphi\simeq\psi$. If $f$ is injective, then $\varphi$ is a \emph{subform} of $\psi$, which we denote by $\varphi\subseteq\psi$; in such case, there exists a $p$-form $\tau$ over $F$ such that $\psi\simeq\varphi\ort\tau$. If there exists $c\in F^*$ such that $\varphi\simeq c\psi$, then we call $\varphi$ and $\psi$ \emph{similar}, which we denote by $\varphi\simsim\psi$.

We call a $p$-form  $\varphi:V\goto F$ \emph{isotropic} (or \emph{defective}) if $\varphi(v)=0$ for some nonzero vector $v\in V$; otherwise, $\varphi$ is called \emph{anisotropic} (or \emph{nondefective}). The $p$-form $\varphi$ can be written as ${\varphi\simeq\sigma\ort k\times\sqf{0}}$ for some anisotropic $p$-form $\sigma$ over $F$ and a non-negative integer $k$. Then $\sigma$ is unique up to isometry; we call it the \emph{anisotropic part of $\varphi$}, and denote it by $\varphi_{\an}$. The integer $k$ is called the \emph{defect} of $\varphi$ and is denoted by $\iql{\varphi}$. In particular, we have ${\varphi\simeq\varphi_{\an}\ort\iql{\varphi}\times\sqf{0}}$. 

For a $p$-form  $\varphi:V\goto F$, we denote 
\begin{align*}
&D_F(\varphi)=\{\varphi(v)~|~v\in V\} \quad &\text{and}& \quad &D_F^*(\varphi)=D_F(\varphi)\setminus\{0\},\\
 &G_F^*(\varphi)=\{x\in F^*~|~x\varphi\simeq\varphi\} \quad &\text{and}& \quad &G_F(\varphi)=G_F^*(\varphi)\cup\{0\}.
\end{align*}
Then $D_F(\varphi)$ is the set of all elements of $F$ represented by $\varphi$ (including zero). Note that since $F^p$ is a field, it follows that $D_F(\varphi)$ is an $F^p$-vector space. On the other hand, for any $F^p$-vector space $U$, there exists a unique (up to isometry) anisotropic $p$-form $\sigma$ such that $D_F(\sigma)=U$ (see \cite[Prop.~2.12]{Hof04}). Note that it follows that for any $a,b\in F$ and $x\in F^*$, we have
\[\sqf{a}\simeq\sqf{ax^p}, \quad \sqf{a,b}\simeq\sqf{a,a+b}.\]
In particular, $\sqf{-a}\simeq\sqf{a}$ for any $a\in F$, because $-1=(-1)^p$. Moreover, the set $W=\{v\in V~|~\varphi(v)=0\}$ is also an $F^p$-vector space, because for any $v,w\in W$, we have $\varphi(v+w)=\varphi(v)+\varphi(w)=0$; it follows that $W$ is the unique maximal isotropic subspace of $V$.

It is easy to see that $\varphi$ is anisotropic if and only if $\dim\varphi=\dim_{F^p}D_F(\varphi)$. More precisely, we have the following lemma:

\begin{lemma}[{\cite[Prop.~2.6]{Hof04}}] \label{Lemma:PickAnisp-subform} \label{Lem:p-subform}
Let $\varphi$ be a $p$-form over $F$.
\begin{enumerate}
	\item Let $\{c_1,\dots,c_k\}$ be any $F^p$-basis of the vector space $D_F(\varphi)$. Then we have $\varphi_{\an}\simeq\sqf{c_1,\dots,c_k}$.
	\item If $a_1,\dots,a_m\in D_F(\varphi)$, then $\sqf{a_1,\dots,a_m}_{\an}\subseteq\varphi$.
\end{enumerate}
\end{lemma}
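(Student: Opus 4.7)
The plan is to deduce both parts from the uniqueness statement recalled immediately before the lemma, namely that an anisotropic $p$-form over $F$ is determined up to isometry by its $F^p$-subspace $D_F(\sigma)$ of represented values. I will also rely on two easy observations: first, since $\varphi\simeq\varphi_{\an}\ort\iql{\varphi}\times\sqf{0}$, adjoining copies of $\sqf{0}$ contributes nothing to the represented values, so $D_F(\varphi)=D_F(\varphi_{\an})$; second, for any $c_1,\dots,c_k\in F$, the diagonal form $\sqf{c_1,\dots,c_k}$ represents exactly $F^p c_1+\dots+F^p c_k$, and it is anisotropic precisely when $c_1,\dots,c_k$ are $F^p$-linearly independent (a relation $\sum c_i x_i^p=0$ with $x_i\in F$ is the same as an $F^p$-linear dependence among the $c_i$, since $x_i\mapsto x_i^p$ is a bijection onto $F^p$).

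For part (i), the second observation shows that $\sqf{c_1,\dots,c_k}$ is anisotropic (as the $c_i$ form an $F^p$-basis) and satisfies $D_F(\sqf{c_1,\dots,c_k})=F^p c_1+\dots+F^p c_k=D_F(\varphi)=D_F(\varphi_{\an})$. Uniqueness therefore forces $\varphi_{\an}\simeq\sqf{c_1,\dots,c_k}$. For part (ii), the idea is a standard basis-extension argument: set $U=F^p a_1+\dots+F^p a_m$, which equals $D_F(\sqf{a_1,\dots,a_m})$ and is an $F^p$-subspace of $D_F(\varphi)$. Pick an $F^p$-basis $\{c_1,\dots,c_\ell\}$ of $U$ and extend it to an $F^p$-basis $\{c_1,\dots,c_\ell,c_{\ell+1},\dots,c_k\}$ of $D_F(\varphi)$. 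Applying (i) twice yields $\sqf{a_1,\dots,a_m}_{\an}\simeq\sqf{c_1,\dots,c_\ell}$ and $\varphi_{\an}\simeq\sqf{c_1,\dots,c_k}$. Since $\sqf{c_1,\dots,c_\ell}$ is an orthogonal summand of $\sqf{c_1,\dots,c_k}$, and $\varphi_{\an}\subseteq\varphi$, the inclusion $\sqf{a_1,\dots,a_m}_{\an}\subseteq\varphi$ follows.

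I do not expect a significant obstacle here: once the uniqueness of an anisotropic $p$-form with a prescribed value set is granted, the whole argument reduces to a change of $F^p$-basis together with the equivalence between $F^p$-linear independence of the coefficients and anisotropy of the diagonal form. The only subtle point in execution is to verify anisotropy of $\sqf{c_1,\dots,c_k}$ \emph{before} invoking the uniqueness statement, otherwise the final identification $\varphi_{\an}\simeq\sqf{c_1,\dots,c_k}$ would be unjustified.
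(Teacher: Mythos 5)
Your argument is correct. Note that the paper does not supply its own proof of this lemma; it is cited from~\cite[Prop.~2.6]{Hof04}, and the proof sketched there proceeds along the same lines you describe. Your derivation is clean and self-contained: it leans precisely on the two facts recalled just before the statement in the preliminaries---that $D_F(\varphi)$ is an $F^p$-vector space and that an anisotropic $p$-form is uniquely determined (up to isometry) by its represented value space---together with the elementary observations that $D_F(\varphi)=D_F(\varphi_{\an})$ and that $\sqf{c_1,\dots,c_k}$ is anisotropic exactly when the $c_i$ are $F^p$-linearly independent (via the bijectivity of $x\mapsto x^p$ onto $F^p$). The basis-extension argument for part~(ii) and the transitivity of the subform relation through $\varphi_{\an}\subseteq\varphi$ are both invoked correctly, and you rightly flag that anisotropy of $\sqf{c_1,\dots,c_k}$ must be established \emph{before} appealing to uniqueness.
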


\subsection{Field extensions} 

Let $\varphi:V\goto F$ be a $p$-form and $E/F$ a field extension. Then we denote by $\varphi_E$ the $p$-form on the $E$-vector space $V_E=E\otimes V$ defined by $\varphi_E(e\otimes v)=e^p\varphi(v)$ for any $e\in E$ and $v\in V$; we have $\dim\varphi=\dim\varphi_E$. In the following, we write $D_E(\varphi)$  for short instead of $D_E(\varphi_E)$.

\bigskip

\begin{definition}
For any element $a\in F$, denote $\pf{a}=\sqf{1,a,\dots,a^{p-1}}$. Let $a_1,\dots,a_n\in F$; then we set $\pf{a_1,\dots,a_n}=\pf{a_1}\otimes\cdots\otimes\pf{a_n}$ and call it an \emph{$n$-fold quasi-Pfister form}. Moreover, for convenience, we call $\sqf{1}$ the \emph{$0$-fold quasi-Pfister form}.

A $p$-form $\varphi$ over $F$ is called a \emph{quasi-Pfister neighbor} if there exists a quasi-Pfister form $\pi$ over $F$ and $c\in F^*$ such that $c\varphi\subseteq\pi$ and $\dim\varphi>\frac{1}{p}\dim\pi$. 
\end{definition}

Let $\pi$ be a quasi-Pfister form over $F$; then $G_F(\pi)=D_F(\pi)$ (\cite[Prop.~4.6]{Hof04}). If $E/F$ is a field extension, then $(\pi_E)_{\an}$ is a quasi-Pfister form (\cite[Lemma~2.6]{Scu13}). Moreover, if $\varphi$ is a quasi-Pfister neighbor of $\pi$, then it holds that $\varphi_E$ is isotropic if and only if $\pi_E$ is isotropic (\cite[Prop.~4.14]{Hof04}).

\subsection{Field theory} \label{Subsec:FieldTheory}

 Since $F^p$ is a field, we can compare extensions of $F$ and of $F^p$. We will use the following lemma repeatedly.

\begin{lemma}[{\cite[Lemma~7.5]{Hof04}}] \label{Lemma:AlgebraicToThepPower}
Let $\chrst{F}=p$.
\begin{enumerate}
	\item Let $E/F$ be a field extension and $\alpha\in E$. Then $\alpha$ is algebraic over $F$ if and only if $\alpha^p$ is algebraic over $F^p$. Moreover, it holds that
			\[[F(\alpha):F]=[F^p(\alpha^p):F^p].\]
	\item If $a_1,\dots,a_r\in F$, then
			\[[F(\sqrt[p]{a_1},\dots,\sqrt[p]{a_r}):F]=[F^p(a_1,\dots,a_r):F^p].\]
\end{enumerate}
\end{lemma}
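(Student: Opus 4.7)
The plan is to derive both statements from a single observation: in characteristic $p$, the Frobenius endomorphism $\phi\colon x\mapsto x^p$ restricts to a ring isomorphism from any subfield of $E$ onto its subfield of $p$-th powers, and in particular gives a field isomorphism $F\to F^p$. The whole proof then reduces to showing that the extensions $F(\alpha)/F$ and $F^p(\alpha^p)/F^p$ (resp. their multi-generator analogues) correspond to one another under this isomorphism.

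For part (i), I would first dispatch the equivalence by direct polynomial manipulation. If $\alpha$ satisfies $f(X)=\sum c_iX^i\in F[X]$, then raising $f(\alpha)=0$ to the $p$-th power yields $\sum c_i^p\alpha^{ip}=0$, so $\alpha^p$ is algebraic over $F^p$. Conversely, if $\sum d_i(\alpha^p)^i=0$ with $d_i=e_i^p\in F^p$, extracting a $p$-th root of $\sum e_i^p\alpha^{ip}=0$ gives $\sum e_i\alpha^i=0$, so $\alpha$ is algebraic over $F$. For the degree equality, I would exploit that in the algebraic case $F(\alpha)=F[\alpha]$, so by the freshman's-dream identity $(x+y)^p=x^p+y^p$ and $(cx)^p=c^px^p$ one has $\phi(F[\alpha]) = F^p[\alpha^p] = F^p(\alpha^p)$. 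Since $\phi$ restricts to a ring isomorphism $F(\alpha)\to F^p(\alpha^p)$ carrying $F$ bijectively onto $F^p$, an $F$-basis of $F(\alpha)$ is sent to an $F^p$-basis of $F^p(\alpha^p)$, yielding $[F(\alpha):F]=[F^p(\alpha^p):F^p]$.

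For part (ii), the same Frobenius argument applies verbatim with $\alpha$ replaced by the finite family $\sqrt[p]{a_1},\dots,\sqrt[p]{a_r}$. Setting $K=F(\sqrt[p]{a_1},\dots,\sqrt[p]{a_r})=F[\sqrt[p]{a_1},\dots,\sqrt[p]{a_r}]$ (algebraic extension), the same polynomial calculation gives $K^p=F^p[a_1,\dots,a_r]=F^p(a_1,\dots,a_r)$, and the Frobenius isomorphism $K\to K^p$ sends $F$ onto $F^p$, so $[K:F]=[K^p:F^p]=[F^p(a_1,\dots,a_r):F^p]$. Alternatively one could argue inductively on $r$ using part (i) and tower multiplicativity, but the single-step Frobenius argument is cleaner. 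The only mild obstacle throughout is the bookkeeping needed to confirm that $\phi$ is not merely injective into but actually surjective onto the claimed image, which reduces to the observation that as the coefficients of generic polynomial expressions in the generators run over $F$, their $p$-th powers exhaust $F^p$. No genuinely nontrivial step is involved.
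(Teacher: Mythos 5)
The paper does not prove this lemma; it is quoted verbatim from Hoffmann's original work, so there is no in-paper argument to compare against. Your Frobenius argument is correct and is the standard proof. The one point worth making explicit is the reason a basis goes to a basis: the Frobenius $\phi\colon x\mapsto x^p$ is not only a ring isomorphism $F(\alpha)\to F^p(\alpha^p)$ (resp.\ $K\to K^p$) restricting to a bijection $F\to F^p$, but is also $\phi|_F$-semilinear, i.e.\ $\phi(cv)=\phi(c)\phi(v)$ and $\phi(v+w)=\phi(v)+\phi(w)$, and a semilinear bijection over a field isomorphism takes bases to bases. Your verification that $\phi(F[\alpha])=F^p[\alpha^p]$ (and likewise $\phi(K)=F^p[a_1,\dots,a_r]$) handles the surjectivity bookkeeping you flagged, so the argument is complete.
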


\bigskip

We say about a finite set $\{a_1,\dots,a_n\}\subseteq F$ that it is \emph{$p$-independent over $F$} if $[F^p(a_1,\dots,a_n):F^p]=p^n$; otherwise, we call the set (or the elements of the set) \emph{$p$-dependent over $F$}. We say that a set $S\subseteq F$ is \emph{$p$-independent over $F$} if any finite subset of $S$ is $p$-independent over $F$. 

\begin{lemma}[{\cite[pg.~27]{Pick50}}]\label{Lem:p-independence}
Let $a_1,\dots,a_n\in F$. The following are equivalent:
\begin{enumerate}
	\item The set $\{a_1,\dots,a_n\}$ is $p$-independent over $F$,
	\item for any $1\leq i\leq n$, we have $a_i\notin F^p(a_1,\dots,a_{i-1}, a_{i+1},\dots,a_n)$,
	\item the system $\bigl(a_1^{i_1}\cdots a_n^{i_n}~\bigl|~(i_1,\dots,i_n)\in\{0,\dots,p-1\}^n\bigr)$ is $F^p$-linearly independent.
\end{enumerate}
\end{lemma}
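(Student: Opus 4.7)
My plan is to drive both equivalences from a single structural fact. Since each $a_j$ satisfies the polynomial $X^p - a_j^p \in F^p[X]$, the ring $F^p[a_1,\dots,a_n]$ is already a field and coincides with $F^p(a_1,\dots,a_n)$; moreover, it is spanned over $F^p$ by the $p^n$ monomials $a_1^{i_1}\cdots a_n^{i_n}$ with $0 \le i_j \le p-1$, because any higher power of $a_j$ can be reduced using $a_j^p \in F^p$. This yields the universal upper bound $[F^p(a_1,\dots,a_n):F^p] \le p^n$, with equality precisely when those $p^n$ monomials are $F^p$-linearly independent. The equivalence (1)$\Leftrightarrow$(3) will then be immediate.

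Next, I will prove (1)$\Rightarrow$(2) contrapositively: if $a_i \in F^p(a_1,\dots,a_{i-1},a_{i+1},\dots,a_n)$ for some $i$, then $F^p(a_1,\dots,a_n) = F^p(a_1,\dots,a_{i-1},a_{i+1},\dots,a_n)$, and applying the spanning observation to the remaining $n-1$ generators gives degree at most $p^{n-1}$, contradicting $p$-independence.

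For (2)$\Rightarrow$(1), I will consider the tower
\[ F^p \subseteq F^p(a_1) \subseteq F^p(a_1,a_2) \subseteq \dots \subseteq F^p(a_1,\dots,a_n). \]
Each successive step has degree either $1$ or $p$, since $a_j$ satisfies $X^p - a_j^p$ over the preceding field. If any step $j$ were trivial, then $a_j \in F^p(a_1,\dots,a_{j-1}) \subseteq F^p(a_1,\dots,a_{j-1},a_{j+1},\dots,a_n)$, contradicting (2). Hence all $n$ steps have degree $p$ and $[F^p(a_1,\dots,a_n):F^p] = p^n$.

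I do not anticipate a real obstacle: the whole argument reduces to the monomial spanning observation together with an elementary tower-law count. The only point that requires some care is applying (2) at the correct index $j$ in the last step; this side-steps what would otherwise be an unnecessary induction on $n$.
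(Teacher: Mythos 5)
The paper does not prove this lemma; it is stated with a citation to Pickert \cite[pg.~27]{Pick50} and used as a known fact. So there is no in-paper argument to compare against. That said, your proof is correct and complete. The key structural observation (that $F^p[a_1,\dots,a_n]=F^p(a_1,\dots,a_n)$ is spanned over $F^p$ by the $p^n$ reduced monomials, giving the uniform bound $[F^p(a_1,\dots,a_n):F^p]\le p^n$) makes (1)$\Leftrightarrow$(3) and the contrapositive of (1)$\Rightarrow$(2) immediate, and the tower-law argument for (2)$\Rightarrow$(1) is clean: each step has degree dividing $p$ because $a_j$ satisfies $X^p-a_j^p$ over the preceding field, and a degree-$1$ step at index $j$ would place $a_j$ in $F^p(a_1,\dots,a_{j-1})\subseteq F^p(a_1,\dots,a_{j-1},a_{j+1},\dots,a_n)$, contradicting (2). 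This is essentially the standard textbook proof of the characterization of $p$-independence, and it matches the definition the paper uses for (1), namely $[F^p(a_1,\dots,a_n):F^p]=p^n$.
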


\begin{example}
(i) It is clear directly from the definition that an one-element set $\{a\}\subseteq F$ is $p$-independent if and only if $a\notin F^p$.

(ii) Let $a,b\in F\setminus F^p$. Then, by Lemma~\ref{Lem:p-independence}, $a,b$ are $p$-dependent if and only if $a\in F^p(b)$ if and only if $b\in F^p(a)$.
\end{example}

\begin{corollary} \label{Cor:pIndAndAnisPF}
Let $a_1,\dots,a_n\in F^*$. The set $\{a_1,\dots,a_n\}$ is $p$-independent over $F$ if and only if the quasi-Pfister form $\pf{a_1,\dots,a_n}$ is anisotropic over $F$.
\end{corollary}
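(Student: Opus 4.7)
The plan is to unfold the definition of $\pf{a_1,\dots,a_n}$ and apply the characterization of anisotropy in terms of $\dim_{F^p}D_F(\cdot)$ together with Lemma~\ref{Lem:p-independence}(iii). Explicitly, by iterating the definition of the tensor product of $p$-forms, one gets
\[\pf{a_1,\dots,a_n} \simeq \mathop{\bot}_{(i_1,\dots,i_n)\in\{0,\dots,p-1\}^n} \sqf{a_1^{i_1}\cdots a_n^{i_n}},\]
which is a $p$-form of dimension exactly $p^n$ whose set of represented elements is, by definition, the $F^p$-span of the monomials $a_1^{i_1}\cdots a_n^{i_n}$ with $(i_1,\dots,i_n)\in\{0,\dots,p-1\}^n$.

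Next I would invoke the criterion, recalled just before Lemma~\ref{Lemma:PickAnisp-subform}, that an anisotropic $p$-form is characterized by the equality $\dim\varphi=\dim_{F^p}D_F(\varphi)$. Setting $\pi=\pf{a_1,\dots,a_n}$, this means that $\pi$ is anisotropic if and only if the $p^n$ generating monomials $a_1^{i_1}\cdots a_n^{i_n}$ are $F^p$-linearly independent (if they were dependent, the span would have dimension $<p^n=\dim\pi$, forcing isotropy via Lemma~\ref{Lem:p-subform}(i)).

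Finally, Lemma~\ref{Lem:p-independence}(iii) asserts that precisely this $F^p$-linear independence of the system $\bigl(a_1^{i_1}\cdots a_n^{i_n}\bigr)$ is the definition of $\{a_1,\dots,a_n\}$ being $p$-independent over $F$. Chaining the three equivalences yields the claim. There is essentially no obstacle here: the argument is a two-line bookkeeping exercise once one notes that the tensor product expansion of $\pf{a_1,\dots,a_n}$ has entries given exactly by the monomials appearing in Lemma~\ref{Lem:p-independence}(iii); the only mild care needed is to confirm that the orthogonal sum expansion above is correct, which follows by induction on $n$ using the distributivity $\varphi\otimes(\psi\ort\tau)\simeq(\varphi\otimes\psi)\ort(\varphi\otimes\tau)$ and the definition $\pf{a_i}=\sqf{1,a_i,\dots,a_i^{p-1}}$.
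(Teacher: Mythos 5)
Your proof is correct and matches the paper's own argument: both expand $\pf{a_1,\dots,a_n}$ into the orthogonal sum over monomials $a_1^{i_1}\cdots a_n^{i_n}$, observe that anisotropy is equivalent to $\dim_{F^p}D_F(\pf{a_1,\dots,a_n})=p^n$, and invoke Lemma~\ref{Lem:p-independence}(iii) to identify this with $p$-independence.
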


\begin{proof}
First of all, note that 
\[\pf{a_1,\dots,a_n}\simeq \Ort_{(i_1,\dots,i_n)\in\{0,\dots,p-1\}^n}\sqf{a_1^{i_1}\cdots a_n^{i_n}}.\]
Now $\{a_1,\dots,a_n\}$ is $p$-independent over $F$ if and only if (by Lemma~\ref{Lem:p-independence}) the system $(a_1^{i_1}\cdots a_n^{i_n}~|~(i_1,\dots,i_n)\in\{0,\dots,p-1\}^n)$ is $F^p$-linearly independent if and only if $\dim_{F^p}D_F(\pf{a_1,\dots,a_n})=p^n$ if and only if $\pf{a_1,\dots,a_n}$ is anisotropic over $F$.
\end{proof}

Let $E$ be a field with $F^p\subseteq E\subseteq F$. Note that the extension $E/F^p$ is purely inseparable of exponent one (an \emph{exponent} of a purely inseparable field extension $L/K$ is the infimum of integers $n$ such that $L^{p^n}\subseteq K$). We call a set $\mB\subseteq E$ a \emph{$p$-basis of $E$ over $F$} if $\mB$ is $p$-independent over $F$ and $F^p(\mB)=E$. 

\begin{lemma}[cf. {\cite[Cor.~A.8.9]{CSAandGC}}]\label{Lemma:p-bases} 
Let $E$ be a field with $F^p\subseteq E\subseteq F$. Then the following hold:
\begin{enumerate}[(i)]
	\item There exists a $p$-basis of $E$ over $F$. 
	\item If $\{a_1,\dots,a_n\}\subseteq E$ is $p$-independent over $F$, then there exists a set $\mathcal{A}\subseteq E$ which is $p$-independent over $F$ and such that $\{a_1,\dots,a_n\}\cup\mathcal{A}$ is a $p$-basis of $E$ over $F$.
	\item Let $\mathcal{A}\subseteq E$ be such that $F^p(\mathcal{A})=E$. Then there exists $\mB\subseteq\mathcal{A}$ which is a $p$-basis of $E$ over $F$.
\end{enumerate}
\end{lemma}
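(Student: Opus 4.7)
The plan is to prove all three parts by Zorn's lemma applied in the poset of $p$-independent subsets of $E$ (with various constraints), using throughout the observation that $F^p \subseteq E \subseteq F$ forces $x^p \in F^p$ for every $x \in E$, hence $[F^p(x_1,\dots,x_n):F^p] \leq p^n$ for any $x_1,\dots,x_n \in E$, with equality precisely when $\{x_1,\dots,x_n\}$ is $p$-independent over $F$ (by Lemma~\ref{Lemma:AlgebraicToThepPower} combined with Lemma~\ref{Lem:p-independence}). Since $p$-independence is a condition on finite subsets, the union of any chain of $p$-independent sets is $p$-independent, so Zorn's lemma applies in each case below.

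For (i), take $\mathcal{B}$ to be a maximal $p$-independent subset of $E$. To see that $F^p(\mathcal{B}) = E$, suppose there exists $x \in E \setminus F^p(\mathcal{B})$ and show that $\mathcal{B} \cup \{x\}$ would still be $p$-independent. For any finite $\{b_1,\dots,b_n\} \subseteq \mathcal{B}$ one has $[F^p(b_1,\dots,b_n):F^p] = p^n$, and because $x^p \in F^p$ the minimal polynomial of $x$ over $F^p(b_1,\dots,b_n)$ divides $T^p - x^p$; since $x \notin F^p(\mathcal{B}) \supseteq F^p(b_1,\dots,b_n)$, this minimal polynomial has degree $p$, giving $[F^p(b_1,\dots,b_n,x):F^p] = p^{n+1}$ and thus $p$-independence of $\{b_1,\dots,b_n,x\}$. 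This contradicts maximality of $\mathcal{B}$. Part (ii) follows by the same argument applied to the poset of $p$-independent subsets of $E$ containing $\{a_1,\dots,a_n\}$; such a set exists (namely $\{a_1,\dots,a_n\}$ itself), a maximal element $\mathcal{C}$ satisfies $F^p(\mathcal{C}) = E$ by the same reasoning, and $\mathcal{A} := \mathcal{C} \setminus \{a_1,\dots,a_n\}$ is the desired set.

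For (iii), let $\mathcal{B}$ be maximal among $p$-independent subsets of $\mathcal{A}$. It suffices to show $\mathcal{A} \subseteq F^p(\mathcal{B})$, since then $E = F^p(\mathcal{A}) \subseteq F^p(\mathcal{B}) \subseteq E$. For $x \in \mathcal{A} \setminus \mathcal{B}$, maximality forces $\mathcal{B} \cup \{x\}$ to fail $p$-independence; since $\mathcal{B}$ itself is $p$-independent, the offending finite subset must contain $x$, say $\{b_1,\dots,b_n,x\}$ with $b_i \in \mathcal{B}$. Then $[F^p(b_1,\dots,b_n,x):F^p] < p^{n+1}$, while $[F^p(b_1,\dots,b_n):F^p] = p^n$ and adjoining $x$ multiplies this degree by $1$ or $p$; the only possibility is degree $p^n$, giving $x \in F^p(b_1,\dots,b_n) \subseteq F^p(\mathcal{B})$.

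There is no real obstacle: the argument is the standard one producing bases of an abstract algebraic structure via an exchange-style maximality principle. The only substantive input is the translation between $p$-independence and field-theoretic degree, which is immediate from the exponent-one hypothesis $F^p \subseteq E \subseteq F$ and the lemmas recalled above.
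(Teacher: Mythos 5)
Your proof is correct and uses exactly the same approach as the paper: Zorn's lemma applied to the three posets of $p$-independent subsets of $E$ (respectively: arbitrary, containing $\{a_1,\dots,a_n\}$, contained in $\mathcal{A}$). The paper leaves the verification that a maximal element works as an exercise; you have supplied those details correctly, in particular the dichotomy that adjoining an element of exponent one multiplies the degree over $F^p$ by $1$ or $p$.
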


\begin{proof}
The proof is an easy exercise on Zorn's lemma: 
We seek a maximal element of $\mathcal{S}$, where:
\begin{align*}
\mathrm{(i)} \quad &\mathcal{S}=\left\{A~|~A\subseteq E,\ A\ p\text{-independent over } F\right\},\\
\mathrm{(ii)} \quad&\mathcal{S}=\left\{A~|~\{a_1,\dots,a_n\}\subseteq A\subseteq E,\ A\  p\text{-independent over } F\right\},\\
\mathrm{(iii)} \quad&\mathcal{S}=\left\{A~|~A\subseteq \mathcal{A}, \ A\ p\text{-independent over } F\right\}. \qedhere
\end{align*}
\end{proof}

\subsection{Norm fields and norm forms}

Let $\varphi$ be a $p$-form over $F$. We define the \emph{norm field} of $\varphi$ over $F$ as the field 
\[N_F(\varphi)=F^p\left(\frac{a}{b}~\Bigl|~a,b\in D_F^*(\varphi)\right).\] 
Note that $N_F(\varphi)$ is a finite field extension of $F^p$; we define the \emph{norm degree} of $\varphi$ over $F$ as $\ndeg_F{\varphi}=[N_F(\varphi):F^p]$. 

It is obvious from the definition that $N_F(\varphi)=N_F(\varphi_\an)$ and also that $N_F(\varphi)=N_F(c\varphi)$ for any $c\in F^*$. Moreover, if $\psi$ is another $p$-form over $F$ such that $\psi\simeq\varphi$, then $N_F(\psi)=N_F(\varphi)$. Finally, if $\tau_\an\subseteq c\varphi_\an$ for some $p$-form $\tau$ over $F$ and $c\in F^*$, then $N_F(\tau)\subseteq N_F(\varphi)$.

\begin{lemma}[{\cite[Prop.~4.8]{Hof04}}]\label{Lemma:PrereqForNormForm}
Let $\varphi$ be a nonzero $p$-form with $\ndeg_F\varphi=p^n$. Then $n+1\leq\dim\varphi_\an\leq p^n$.
\end{lemma}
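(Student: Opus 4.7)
The plan is to translate both bounds into linear algebra over $F^p$ inside $N_F(\varphi)$, by dividing every representable element by a single fixed one. Fix $b_0 \in D_F^*(\varphi)$ (which exists because $\varphi_{\an}$ must be nonzero, otherwise $N_F(\varphi) = F^p$ and there is no lower bound to prove) and set $U = \{a/b_0 \mid a \in D_F(\varphi)\} \subseteq N_F(\varphi)$. Multiplication by $b_0^{-1}$ is an $F^p$-linear bijection from $D_F(\varphi) = D_F(\varphi_{\an})$ onto $U$, so Lemma~\ref{Lemma:PickAnisp-subform} gives $\dim_{F^p} U = \dim \varphi_{\an}$; moreover $1 = b_0/b_0 \in U$. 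The upper bound is then immediate: $\dim \varphi_{\an} = \dim_{F^p} U \leq [N_F(\varphi):F^p] = p^n$.

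For the lower bound, the key step is to show that $N_F(\varphi) = F^p[U]$, i.e., that the full norm field is generated by $U$ as an $F^p$-\emph{algebra}, not just as a field. On the one hand, for any $a, b \in D_F^*(\varphi)$, the identity $a/b = (a/b_0)(b/b_0)^{-1}$ shows $N_F(\varphi) \subseteq F^p(U)$; the reverse inclusion is trivial. On the other hand, each $u = a/b_0 \in U$ satisfies $u^p = a^p/b_0^p \in F^p$, so $U$ consists of elements of degree at most $p$ over $F^p$. Hence $F^p[U]$ is an integral extension of $F^p$ sitting inside a field, which forces $F^p[U]$ to be a field, and therefore $F^p[U] = F^p(U) = N_F(\varphi)$.

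To conclude, set $d = \dim \varphi_{\an}$ and take an $F^p$-basis $\{1, u_2, \dots, u_d\}$ of $U$ (possible since $1 \in U$). Then $N_F(\varphi) = F^p[u_2, \dots, u_d]$ is spanned over $F^p$ by the monomials $u_2^{e_2} \cdots u_d^{e_d}$ with $0 \leq e_i \leq p-1$, using $u_i^p \in F^p$. Therefore $p^n = [N_F(\varphi):F^p] \leq p^{d-1}$, which yields $d \geq n+1$. The only step that requires more than bookkeeping is the identification $N_F(\varphi) = F^p[U]$, which combines the reduction to a single denominator $b_0$ with the exponent-one property of $U$ over $F^p$; everything else is linear algebra.
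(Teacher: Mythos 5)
Your proof is correct. Since the paper cites this statement from Hoffmann without reproducing a proof, I will assess yours on its own terms. The reduction to the subspace $U = b_0^{-1}D_F(\varphi) \subseteq N_F(\varphi)$, combined with the observation that every element of $U$ has $p$-th power in $F^p$, is exactly the natural argument: the upper bound falls out immediately from $U \subseteq N_F(\varphi)$, and the lower bound from the fact that $U$ generates $N_F(\varphi)$ over $F^p$ by elements of exponent one. The one place where you take a slightly circuitous route is the step $N_F(\varphi) = F^p[U]$, for which you invoke the general fact that an integral domain integral over a field is itself a field. That is valid, but it can be avoided: after choosing a basis $\{1, u_2, \dots, u_d\}$ of $U$, the tower law gives directly
\[
[F^p(u_2,\dots,u_d):F^p] \;=\; \prod_{i=2}^{d}\,[F^p(u_2,\dots,u_i):F^p(u_2,\dots,u_{i-1})] \;\leq\; p^{d-1},
\]
since $u_i^p \in F^p$ forces each intermediate degree to be at most $p$; this sidesteps the need to identify $F^p[U]$ with $F^p(U)$. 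One small imprecision in your parenthetical: if $\varphi_\an$ were zero, then $n = 0$ and the claimed lower bound $1 \leq 0$ would actually be \emph{false}, so it is not that there is ``no lower bound to prove'' but rather that the hypothesis ``nonzero'' (meaning $D_F^*(\varphi) \neq \emptyset$, equivalently $\varphi_\an \neq 0$) excludes that degenerate case; your argument correctly uses this hypothesis to produce $b_0$, so the proof itself is unaffected.
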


Note that the norm degree is always a $p$-power. Let us assume that $N_F(\varphi)=F^p(b_1,\dots,b_n)$ for some $b_i\in F$; then $\{b_1,\dots,b_n\}$ is $p$-independent over $F$ if and only if $\ndeg_F{\varphi}=p^n$. By Lemma~\ref{Lemma:p-bases}, such a $p$-basis of $N_F(\varphi)$ over $F$ always exists.

\begin{lemma}[{\cite[Lemma~4.2 and Cor.~4.3]{Hof04}}]\label{Lemma:NormFieldBasics}
Let $\varphi$ be a $p$-form over $F$.
\begin{enumerate}
	\item If $\varphi\simeq\sqf{a_0,\dots,a_n}$ for some $n\geq1$ and $a_1,\dots,a_n\in F$ with $a_0\neq0$, then $N_F(\varphi)=F^p\bigl(\frac{a_1}{a_0},\dots,\frac{a_n}{a_0}\bigr)$.
	\item Suppose $N_F(\varphi)=F^p(b_1,\dots,b_m)$ for some $b_1,\dots,b_m\in F^*$  and let $E/F$ be a field extension. Then $N_E(\varphi)=E^p(b_1,\dots,b_m)$.
\end{enumerate}
\end{lemma}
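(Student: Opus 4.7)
The plan is to prove (i) directly from the definition of $N_F(\varphi)$ by checking both inclusions, and then to deduce (ii) from (i) by picking a diagonalization of $\varphi$ and applying (i) over both $F$ and $E$.

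For part~(i), assume $\varphi\simeq\sqf{a_0,\dots,a_n}$ with $a_0\neq 0$. The inclusion $F^p\bigl(\tfrac{a_1}{a_0},\dots,\tfrac{a_n}{a_0}\bigr)\subseteq N_F(\varphi)$ is immediate: each $a_i$ lies in $D_F(\varphi)$, and if $a_i\neq 0$ then $\tfrac{a_i}{a_0}\in N_F(\varphi)$ by definition, while if $a_i=0$ then $\tfrac{a_i}{a_0}=0\in F^p$. For the reverse inclusion, note that any element of $D_F^*(\varphi)$ has the form $a=\sum_{i=0}^n a_i x_i^p$ with $x_i\in F$, so
\[\frac{a}{a_0}=\sum_{i=0}^n \frac{a_i}{a_0}\, x_i^p \in F^p\bigl(\tfrac{a_1}{a_0},\dots,\tfrac{a_n}{a_0}\bigr),\]
since $x_i^p\in F^p$. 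The same holds for any other $b\in D_F^*(\varphi)$, and then $\tfrac{a}{b}=\tfrac{a/a_0}{b/a_0}$ lies in the same field. Taking the field generated by all such ratios gives the desired inclusion.

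For part~(ii), fix any diagonalization $\varphi\simeq\sqf{c_0,\dots,c_n}$ with $c_0\neq 0$. Applying (i) over $F$ and over $E$ yields
\[N_F(\varphi)=F^p\bigl(\tfrac{c_1}{c_0},\dots,\tfrac{c_n}{c_0}\bigr), \qquad N_E(\varphi)=E^p\bigl(\tfrac{c_1}{c_0},\dots,\tfrac{c_n}{c_0}\bigr).\]
The hypothesis $N_F(\varphi)=F^p(b_1,\dots,b_m)$ means that each $b_j$ is an $F^p$-polynomial in the $\tfrac{c_i}{c_0}$ and vice versa. Since $F^p\subseteq E^p$, both sets of relations lift to $E^p$, giving $E^p(b_1,\dots,b_m)=E^p\bigl(\tfrac{c_1}{c_0},\dots,\tfrac{c_n}{c_0}\bigr)=N_E(\varphi)$.

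There is no real obstacle here; the only mild subtlety is ensuring in part~(i) that although the diagonal coefficients $a_0,\dots,a_n$ are not unique, the resulting generating set $\{a_i/a_0\}$ gives the intrinsically defined $N_F(\varphi)$, which is handled precisely by the two-sided inclusion above. Part~(ii) is then a formal base-change argument that hinges entirely on having (i) available over arbitrary base fields of characteristic $p$.
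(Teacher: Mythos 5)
The paper does not prove this lemma itself; it cites it directly from Hoffmann (\cite[Lemma~4.2 and Cor.~4.3]{Hof04}), so there is no in-paper proof to compare against. Your argument is nonetheless correct and is the standard one: part~(i) is a two-sided inclusion that uses $D_F(\varphi)=\sum_i F^p a_i$ and closes under quotients by writing $a/b=(a/a_0)/(b/a_0)$; part~(ii) then follows by applying (i) to a fixed diagonalization over both $F$ and $E$ and observing that both $E^p(b_1,\dots,b_m)$ and $E^p(c_1/c_0,\dots,c_n/c_0)$ equal the compositum $E^p\cdot N_F(\varphi)$. One small point worth making explicit in (ii): the relations expressing each $b_j$ in terms of the $c_i/c_0$ (and vice versa) are genuinely \emph{polynomial} over $F^p$ because the extension is finite (indeed purely inseparable of exponent one), so $F^p[c_1/c_0,\dots,c_n/c_0]$ is already a field; alternatively, one can avoid this entirely by arguing at the level of composita as above. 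You also implicitly assume $\varphi$ is nonzero so that a nonzero $c_0$ exists; the degenerate case $D_F^*(\varphi)=\emptyset$ is trivial and could be dispatched in one line.
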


Recall that, by Lemma~\ref{Lemma:p-bases}, any $p$-generating set contains a $p$-basis. Therefore, part (i) of the previous lemma implies the following:

\begin{corollary}\label{Cor:BasisSubform}
Let $\varphi\simeq\sqf{a_0,\dots,a_n}$ for $n\geq1$ and $a_0,\dots,a_n\in F$ with $a_0\neq0$. Moreover, suppose that $\ndeg_F\varphi=p^k$. Then there exists a subset $\{i_1,\dots,i_k\}\subseteq\{1,\dots,n\}$ such that $\left\{\frac{a_{i_1}}{a_0},\dots,\frac{a_{i_k}}{a_0}\right\}$ is a $p$-basis of $N_F(\varphi)$ over $F$.

 In particular, if $1\in D_F^*(\varphi)$, then there exist $b_1,\dots,b_n\in F$ such that $\varphi\simeq\sqf{1,b_1,\dots,b_n}$ and $N_F(\varphi)=F^p(b_1,\dots,b_k)$.
\end{corollary}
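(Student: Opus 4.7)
The statement is essentially a combination of Lemma~\ref{Lemma:NormFieldBasics}(i), which identifies an explicit $p$-generating set for $N_F(\varphi)$, and Lemma~\ref{Lemma:p-bases}(iii), which extracts a $p$-basis from any $p$-generating set. So the plan is simply to assemble these two ingredients and check cardinality.

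First I would invoke Lemma~\ref{Lemma:NormFieldBasics}(i) to write
\[
N_F(\varphi)=F^p\bigl(\tfrac{a_1}{a_0},\dots,\tfrac{a_n}{a_0}\bigr).
\]
Hence the finite set $\mathcal{A}=\bigl\{\frac{a_1}{a_0},\dots,\frac{a_n}{a_0}\bigr\}\subseteq N_F(\varphi)$ satisfies $F^p(\mathcal{A})=N_F(\varphi)$, i.e.\ it is a $p$-generating set of $N_F(\varphi)$ over $F$. By Lemma~\ref{Lemma:p-bases}(iii) applied to $E=N_F(\varphi)$ and this $\mathcal{A}$, there exists a subset $\mathcal{B}\subseteq\mathcal{A}$ which is a $p$-basis of $N_F(\varphi)$ over $F$; write $\mathcal{B}=\bigl\{\frac{a_{i_1}}{a_0},\dots,\frac{a_{i_m}}{a_0}\bigr\}$ with distinct indices $i_1,\dots,i_m\in\{1,\dots,n\}$.

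It remains to check that $m=k$. Since $\mathcal{B}$ is $p$-independent over $F$ and $F^p(\mathcal{B})=N_F(\varphi)$, Lemma~\ref{Lemma:AlgebraicToThepPower}(ii) (or the definition of $p$-independence itself) gives
\[
p^m=[F^p(\mathcal{B}):F^p]=[N_F(\varphi):F^p]=\ndeg_F\varphi=p^k,
\]
so $m=k$, proving the first assertion.

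For the "in particular" part, suppose $1\in D_F^*(\varphi)$. By Lemma~\ref{Lem:p-subform}(ii) applied to $a_1=1$, we have $\sqf{1}\subseteq\varphi$, so there exist $c_1,\dots,c_n\in F$ with $\varphi\simeq\sqf{1,c_1,\dots,c_n}$. Apply the first part to this presentation with $a_0=1$: we obtain a subset $\{i_1,\dots,i_k\}\subseteq\{1,\dots,n\}$ such that $\{c_{i_1},\dots,c_{i_k}\}$ is a $p$-basis of $N_F(\varphi)$ over $F$. Reordering the entries $c_1,\dots,c_n$ (which preserves the isometry class of $\sqf{1,c_1,\dots,c_n}$) so that $c_{i_1},\dots,c_{i_k}$ come first yields the desired $b_1,\dots,b_n$ with $\varphi\simeq\sqf{1,b_1,\dots,b_n}$ and $N_F(\varphi)=F^p(b_1,\dots,b_k)$.

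There is no genuine obstacle here — the only subtlety is confirming that a $p$-basis necessarily has cardinality $\log_p\ndeg_F\varphi$, which is immediate from the definition of $p$-independence together with the norm degree.
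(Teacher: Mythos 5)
Your argument is correct and follows exactly the route the paper intends: the paper gives no formal proof, only the remark that Lemma~\ref{Lemma:NormFieldBasics}(i) produces a $p$-generating set and Lemma~\ref{Lemma:p-bases}(iii) extracts a $p$-basis from it, which is precisely what you spell out, together with the (trivial but worth noting) cardinality check via $\ndeg_F\varphi=[N_F(\varphi):F^p]$. The ``in particular'' reduction via $\sqf{1}\subseteq\varphi$ and a reordering is likewise the standard step; nothing is missing.
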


Let $\varphi$ be a $p$-form over $F$ with $\ndeg_F{\varphi}=p^n$ and ${N_F(\varphi)=F^p(a_1,\dots,a_n)}$ (so, in particular, $\{a_1,\dots,a_n\}$ is $p$-independent over $F$). Then we define the \emph{norm form} of $\varphi$ over $F$, denoted by $\nf_F(\varphi)$, as the quasi-Pfister form $\pf{a_1,\dots,a_n}$. It follows that $\nf_F(\varphi)$ is the smallest quasi-Pfister form that contains a scalar multiple of $\varphi_\an$ as its subform. In particular, if $\varphi$ is a quasi-Pfister form itself, then we have $\nf_F(\varphi)\simeq\varphi_\an$.

\bigskip

We state a condition under which a tensor product of two anisotropic $p$-forms remains anisotropic.

\begin{lemma} \label{Lemma:AnisotropicProduct_pforms}
Let $\varphi$, $\psi$ be two anisotropic $p$-forms over $F$, and assume that ${\ndeg_F(\varphi\otimes\psi)=\ndeg_F\varphi\cdot\ndeg_F\psi}$. Then $\varphi\otimes\psi$ is anisotropic.
\end{lemma}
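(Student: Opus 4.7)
My plan is to exhibit $\varphi\otimes\psi$, up to a scalar, as a subform of an anisotropic quasi-Pfister form, which will automatically make it anisotropic. The natural candidate is $\nf_F(\varphi)\otimes\nf_F(\psi)$.

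First, I would set $\ndeg_F\varphi=p^m$ and $\ndeg_F\psi=p^n$ and fix $p$-bases so that $N_F(\varphi)=F^p(\alpha_1,\dots,\alpha_m)$ and $N_F(\psi)=F^p(\beta_1,\dots,\beta_n)$; by definition $\nf_F(\varphi)\simeq\pf{\alpha_1,\dots,\alpha_m}$ and $\nf_F(\psi)\simeq\pf{\beta_1,\dots,\beta_n}$. A short computation using Lemma~\ref{Lemma:NormFieldBasics}(i) then shows that the norm field of the tensor product is the compositum: writing $\varphi\simeq\sqf{a_0,\dots,a_s}$ and $\psi\simeq\sqf{b_0,\dots,b_t}$ with $a_0,b_0\neq 0$, we have $\varphi\otimes\psi\simeq\sqf{a_ib_j}_{i,j}$, and so
\[N_F(\varphi\otimes\psi)=F^p\Bigl(\tfrac{a_ib_j}{a_0b_0}\Bigr)=F^p\Bigl(\tfrac{a_i}{a_0},\tfrac{b_j}{b_0}\Bigr)=N_F(\varphi)\cdot N_F(\psi)=F^p(\alpha_1,\dots,\alpha_m,\beta_1,\dots,\beta_n).\]

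Next, the hypothesis $\ndeg_F(\varphi\otimes\psi)=p^{m+n}$ translates into $[F^p(\alpha_1,\dots,\alpha_m,\beta_1,\dots,\beta_n):F^p]=p^{m+n}$, which by the definition of $p$-independence means that the combined set $\{\alpha_1,\dots,\alpha_m,\beta_1,\dots,\beta_n\}$ is $p$-independent over $F$. Corollary~\ref{Cor:pIndAndAnisPF} then gives the key fact: the quasi-Pfister form $\pf{\alpha_1,\dots,\alpha_m,\beta_1,\dots,\beta_n}\simeq\nf_F(\varphi)\otimes\nf_F(\psi)$ is anisotropic.

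To conclude, I would invoke the characterization recalled earlier: since $\varphi$ and $\psi$ are themselves anisotropic, $\nf_F(\varphi)$ contains a scalar multiple $c\varphi$ as a subform for some $c\in F^*$, and likewise $\nf_F(\psi)$ contains $d\psi$ for some $d\in F^*$. Distributing the tensor product over the corresponding orthogonal decompositions yields $cd(\varphi\otimes\psi)\subseteq\nf_F(\varphi)\otimes\nf_F(\psi)$, and since the latter is anisotropic, every subform of it is anisotropic; as isotropy is invariant under similarity, $\varphi\otimes\psi$ itself is anisotropic. The only mildly nontrivial step is the identity $N_F(\varphi\otimes\psi)=N_F(\varphi)\cdot N_F(\psi)$; everything else follows directly from the preliminaries, so I do not expect any serious obstacle.
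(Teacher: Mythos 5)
Your proof is correct and follows essentially the same route as the paper's: embed (a scalar multiple of) $\varphi\otimes\psi$ in $\nf_F(\varphi)\otimes\nf_F(\psi)$ and use the norm-degree hypothesis to show that quasi-Pfister form is anisotropic. The only cosmetic difference is that you compute $N_F(\varphi\otimes\psi)=N_F(\varphi)\cdot N_F(\psi)$ explicitly and appeal to Corollary~\ref{Cor:pIndAndAnisPF}, whereas the paper reaches the same conclusion a bit more structurally by noting that $\nf_F(\varphi\otimes\psi)$ is the minimal quasi-Pfister form containing $\varphi\otimes\psi$ (hence a subform of $\nf_F(\varphi)\otimes\nf_F(\psi)$ of the same dimension, forcing isometry with the already-anisotropic norm form); your version is also slightly more careful about the scalar multiples $c,d$ in the subform relation, which the paper elides.
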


\begin{proof} 
We have $\varphi\otimes\psi\subseteq\nf_F(\varphi)\otimes\nf_F(\psi)$, where $\nf_F(\varphi)\otimes\nf_F(\psi)$ is a quasi-Pfister form. As $\nf_F(\varphi\otimes\psi)$ is the smallest quasi-Pfister form containing $\varphi\otimes\psi$, we have $\nf_F(\varphi\otimes\psi)\subseteq\nf_F(\varphi)\otimes\nf_F(\psi)$. By the assumption, we have $\dim\nf_F(\varphi\otimes\psi)=\dim\nf_F(\varphi)\cdot\dim\nf_F(\psi)$; thus, $\nf_F(\varphi\otimes\psi)\simeq\nf_F(\varphi)\otimes\nf_F(\psi)$, and this form is anisotropic. Therefore, its subform $\varphi\otimes\psi$ is anisotropic as well.
\end{proof}

\begin{remark}
The other implication in the previous lemma does not hold in general: Let $\{a,b,c\}\subseteq F^*$ be a $p$-independent set over $F$, and consider the $p$-forms $\varphi\simeq\sqf{1,a,b,c}$ and $\psi\simeq\sqf{1,abc}$. Then 
\[\varphi\otimes\psi\simeq\sqf{1,a,b,c,abc, a^2bc, ab^2c, abc^2}\subseteq\pf{a,b,c},\]
and hence $\varphi\otimes\psi$ is anisotropic. But 
\[\ndeg_F(\varphi\otimes\psi)=p^3<p^3\cdot p=\ndeg_F\varphi\cdot\ndeg_F\psi.\]
\end{remark}

\subsection[Minimal $p$-forms]{Minimal ${p}$-forms} \label{Subsec:Minimalpforms}

In this subsection, we introduce a new class of $p$-forms, so-called minimal $p$-forms. These are the $p$-forms of minimal dimension with respect to their norm degree (cf. Lemma~\ref{Lemma:PrereqForNormForm}):

\begin{definition}
Let $\varphi$ be an anisotropic $p$-form over $F$. We call $\varphi$ \emph{minimal} over $F$ if $\ndeg_F\varphi=p^{\dim\varphi-1}$.
\end{definition}

First, we state some rather obvious properties of minimal $p$-forms.

\begin{lemma}\label{Lem:PropertiesOfMinimalForms_pforms}
Let $\varphi$ and $\psi$ be $p$-forms over $F$ of dimension at least two.
\begin{enumerate}
	\item The $p$-form $\sqf{1,a_1,\dots,a_n}$ is minimal over $F$ if and only if the set $\{a_1,\dots,a_n\}$ is $p$-independent over $F$.
	\item If $\varphi$ is minimal over $F$ and $\psi\simsim\varphi$, then $\psi$ is minimal over $F$.
	\item If $\varphi$ is minimal over $F$ and $\psi\subseteq c\varphi$ for some $c\in F^*$, then $\psi$ is minimal over $F$.
	\item If $\ndeg_F\varphi=p^k$ with $k\geq1$, then $\varphi$ contains a minimal subform of dimension $k+1$.
\end{enumerate}
\end{lemma}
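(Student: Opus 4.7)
For part (i), I would unfold the definitions: $\varphi\simeq\sqf{1,a_1,\dots,a_n}$ has dimension $n+1\geq 2$, and since the coefficient $a_0=1$ is nonzero, Lemma~\ref{Lemma:NormFieldBasics}(i) gives $N_F(\varphi)=F^p(a_1,\dots,a_n)$. Thus the minimality condition $\ndeg_F\varphi=p^n$ translates to $[F^p(a_1,\dots,a_n):F^p]=p^n$, which is exactly $p$-independence of $\{a_1,\dots,a_n\}$ by Lemma~\ref{Lem:p-independence}; and the anisotropy tacitly required by the definition of minimal follows from the $F^p$-linear independence of monomials furnished by Lemma~\ref{Lem:p-independence}(iii). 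Part (ii) is then immediate: the similarity $\psi\simsim\varphi$ means $\psi\simeq c\varphi$ for some $c\in F^*$, and dimension, anisotropy, and norm field are all preserved under scaling (the last as noted before Lemma~\ref{Lemma:PrereqForNormForm}), so the minimality condition transfers intact.

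For (iii), I would first use (ii) to rescale and reduce to $\psi\subseteq\varphi$. The plan is then to pick an $F^p$-basis $c_1,\dots,c_m$ of $D_F(\psi)$ (with $m=\dim\psi$) and extend it---using that $D_F(\psi)$ is an $F^p$-subspace of $D_F(\varphi)$---to an $F^p$-basis $c_1,\dots,c_{n+1}$ of $D_F(\varphi)$, where $n+1=\dim\varphi$. Lemma~\ref{Lemma:PickAnisp-subform}(i) then yields compatible diagonalizations $\psi\simeq\sqf{c_1,\dots,c_m}$ and $\varphi\simeq\sqf{c_1,\dots,c_{n+1}}$, and Lemma~\ref{Lemma:NormFieldBasics}(i) identifies $N_F(\varphi)=F^p(c_2/c_1,\dots,c_{n+1}/c_1)$ and $N_F(\psi)=F^p(c_2/c_1,\dots,c_m/c_1)$. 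Minimality of $\varphi$ together with Lemma~\ref{Lem:p-independence} tells me that $\{c_2/c_1,\dots,c_{n+1}/c_1\}$ is $p$-independent; characterization (ii) of Lemma~\ref{Lem:p-independence} then shows that any subset---in particular $\{c_2/c_1,\dots,c_m/c_1\}$---remains $p$-independent, yielding $\ndeg_F\psi=p^{m-1}$. Since $\psi\subseteq\varphi$ is automatically anisotropic, this is exactly the minimality of $\psi$.

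For (iv), I would apply Corollary~\ref{Cor:BasisSubform} to a diagonalization $\varphi\simeq\sqf{a_0,\dots,a_n}$ with $a_0\neq 0$ to obtain indices $\{i_1,\dots,i_k\}\subseteq\{1,\dots,n\}$ such that $\{a_{i_1}/a_0,\dots,a_{i_k}/a_0\}$ is a $p$-basis of $N_F(\varphi)$. Then $\sigma\simeq\sqf{a_0,a_{i_1},\dots,a_{i_k}}$ is a subform of $\varphi$ of dimension $k+1$, similar to $\sqf{1,a_{i_1}/a_0,\dots,a_{i_k}/a_0}$, and by (i) this similar form is anisotropic and minimal (being a $p$-basis entails $p$-independence); minimality of $\sigma$ then follows from (ii). I expect the only genuine obstacle---and it is mild---to lie in (iii), where the extension of an $F^p$-basis from $D_F(\psi)$ to $D_F(\varphi)$ must be handled carefully and the inheritance of $p$-independence by subsets must be made explicit. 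The other parts essentially amount to a direct unpacking of the norm-field machinery assembled in Section~\ref{Sec:Prel_p-forms}.
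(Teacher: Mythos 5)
Your proposal is correct and follows essentially the same route as the paper: all four parts rest on Lemma~\ref{Lemma:NormFieldBasics}, Corollary~\ref{Cor:BasisSubform}, and the subset-inheritance of $p$-independence, just as in the paper's proof. The only cosmetic differences are that in (iii) you extend an $F^p$-basis and use $c_1$ as the pivot rather than first normalizing to $1\in D_F(\psi)$, and in (iv) you apply Corollary~\ref{Cor:BasisSubform} directly to a diagonalization of $\varphi$ instead of first passing to $c\varphi_\an$; both variants are equally valid.
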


\begin{proof}
(i) $\varphi=\sqf{1,a_1,\dots,a_n}$ is minimal over $F$ if and only if $\ndeg_F(\varphi)=p^{n}$ if and only if $[F^p(a_1,\dots,a_n):F^p]=p^n$ if and only if $\{a_1,\dots,a_n\}$ is $p$-independent over $F$ (by Lemma~\ref{Lemma:NormFieldBasics}).


(ii) This is a consequence of the fact that similar $p$-forms have the same dimension and the same norm field.

(iii) Invoking (ii), we can assume $\psi\subseteq\varphi$ and $1\in D_F(\psi)$. So, there exist $a_1,\dots a_n\in F^*$ and some $m\leq n$ such that $\psi\simeq\sqf{1,a_1,\dots,a_m}$ and ${\varphi\simeq\sqf{1,a_1,\dots,a_n}}$; see Lemma~\ref{Lem:p-subform}. By (i), $\{a_1,\dots,a_n\}$ is $p$-independent over $F$; thus, $\{a_1,\dots,a_m\}$ has to be $p$-independent over $F$, and the claim follows by applying (i) again.

(iv) By Lemma~\ref{Lemma:PrereqForNormForm}, it follows from the assumption on $k$ that $\dim\varphi_\an\geq2$.  There exists $c\in F^*$ such that $1\in D_F^*(c\varphi_\an)$; then, by Corollary~\ref{Cor:BasisSubform}, $c\varphi_\an\simeq\sqf{1,b_1,\dots,b_n}$ for some $b_1,\dots,b_n\in F^*$ with $n\geq k$ and  $\{b_1,\dots,b_k\}$ $p$-independent over $F$. It follows that $c^{-1}\sqf{1,b_1,\dots,b_k}$ is a subform of $\varphi$ which is minimal over $F$ by parts (i) and (ii).
\end{proof}

\begin{remark}
Note that minimality is not invariant under field extensions: let $\{a,b,c\}\subseteq F$ be a $p$-independent set over $F$. Let $E=F(\sqrt[p]{c})$ and consider $\varphi=\sqf{1,a,b,abc}$. Then $N_F(\varphi)=F^p(a,b,abc)=F^p(a,b,c)$, and hence $\varphi$ is minimal over $F$. Nevertheless, $\varphi_E\simeq\sqf{1,a,b,ab}$ is anisotropic but not minimal over $E$, because $\ndeg_E\varphi=p^2$.
\end{remark}

\section{Isotropy over purely inseparable field extensions} \label{Sec:isotropy}

Anisotropic $p$-forms remain anisotropic over purely transcendental and separable field extensions by~\cite[Prop.~5.3]{Hof04}. But the situation gets more interesting when we consider purely inseparable field extensions. The easiest one is a simple purely inseparable extension of exponent one; such an extension is of the form $F(\sqrt[p]{a})/F$ for some $a\in F\setminus F^p$. In the following lemma, we take a closer look at the behavior of $p$-forms over such extensions.

\begin{lemma}[{\cite[Lemma~2.27]{Scu16-Hoff}}] \label{Lemma:p-forms_BasicIsotropy}
Let $\varphi$ be a $p$-form over $F$ and let $a\in F\setminus F^p$. Then:
\begin{enumerate}
	\item $D_{F(\sqrt[p]{a})}(\varphi)=D_F(\pf{a}\otimes\varphi)=\sum_{i=0}^{p-1}a^iD_F(\varphi)$. \label{Lemma:p-forms_BasicIsotropy_i}
	\item $\iql{\varphi_{F(\sqrt[p]{a})}}=\frac1p\iql{\pf{a}\otimes\varphi}$.
\end{enumerate}
\end{lemma}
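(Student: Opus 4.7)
\smallskip

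My plan is to handle (i) by a direct Frobenius calculation in $V_E$, and then derive (ii) from (i) by comparing $F^p$- and $E^p$-dimensions of the common set of represented values.

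For (i), set $\alpha=\sqrt[p]{a}$ and $E=F(\alpha)$. Since $a\notin F^p$, Lemma~\ref{Lemma:AlgebraicToThepPower}(ii) gives $[E:F]=p$, with $\{1,\alpha,\dots,\alpha^{p-1}\}$ an $F$-basis of $E$. Choose a basis $\{e_1,\dots,e_n\}$ of $V$ such that $\varphi\simeq\sqf{c_1,\dots,c_n}$ with $c_j=\varphi(e_j)$. A general element of $V_E$ has the form $w=\sum_j(\sum_{i=0}^{p-1}x_{ij}\alpha^i)\otimes e_j$ with $x_{ij}\in F$, and the characteristic-$p$ Frobenius gives
\[
\varphi_E(w)=\sum_j c_j\Bigl(\sum_{i=0}^{p-1}x_{ij}\alpha^i\Bigr)^{\!p}=\sum_{i=0}^{p-1}a^i\sum_j c_jx_{ij}^p=\sum_{i=0}^{p-1}a^i\varphi(v_i),
\]
where $v_i=\sum_j x_{ij}e_j\in V$. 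This gives $D_E(\varphi)\subseteq\sum_{i=0}^{p-1}a^iD_F(\varphi)$, and the reverse inclusion is immediate by choosing each $v_i$ freely. The second equality in (i) is then just the definition of the tensor product: $\pf{a}\otimes\varphi\simeq\varphi\ort a\varphi\ort\cdots\ort a^{p-1}\varphi$, so $D_F(\pf{a}\otimes\varphi)=\sum_{i=0}^{p-1}a^iD_F(\varphi)$.

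For (ii), I will use the fact (consequence of Lemma~\ref{Lemma:PickAnisp-subform}) that for any $p$-form $\psi$ over a field $K$, $\dim\psi-\iql{\psi}=\dim_{K^p}D_K(\psi)$. Applied to $\varphi_E$ and to $\pf{a}\otimes\varphi$:
\[
\dim\varphi-\iql{\varphi_E}=\dim_{E^p}D_E(\varphi),\qquad p\dim\varphi-\iql{\pf{a}\otimes\varphi}=\dim_{F^p}D_F(\pf{a}\otimes\varphi).
\]
By (i), the two sets $D_E(\varphi)$ and $D_F(\pf{a}\otimes\varphi)$ coincide as subsets of $F\subseteq E$. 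Since $E^p=F^p(\alpha^p)=F^p(a)$ and $a\notin F^p$, Lemma~\ref{Lemma:AlgebraicToThepPower}(ii) yields $[E^p:F^p]=p$, hence
\[
\dim_{F^p}D_F(\pf{a}\otimes\varphi)=[E^p:F^p]\cdot\dim_{E^p}D_E(\varphi)=p\cdot\dim_{E^p}D_E(\varphi).
\]
Substituting into the two expressions above, $\iql{\pf{a}\otimes\varphi}=p(\dim\varphi-\dim_{E^p}D_E(\varphi))=p\iql{\varphi_E}$, which is (ii) after dividing by $p$.

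The only subtle point — which I would not call a serious obstacle — is being careful that the set $D_E(\varphi)$, although defined via the $E$-form $\varphi_E$, actually lies inside $F$ by the computation in (i), so that it makes sense to view it simultaneously as an $E^p$- and an $F^p$-vector space and compare the two dimensions. Everything else is bookkeeping.
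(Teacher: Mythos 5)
The paper itself gives no proof of this lemma; it simply cites \cite[Lemma~2.27]{Scu16-Hoff}, so there is nothing internal to compare against. That said, your argument is correct and is essentially the standard one for this statement. Part (i) is a clean Frobenius computation: expanding $w\in V_E$ over the $F$-basis $\{1,\alpha,\dots,\alpha^{p-1}\}$ of $E$ and using $(\sum_i x_i\alpha^i)^p=\sum_i x_i^p a^i$ immediately gives $D_E(\varphi)=\sum_{i=0}^{p-1}a^iD_F(\varphi)=D_F(\pf a\otimes\varphi)$, and in particular $D_E(\varphi)\subseteq F$. For part (ii), the key identity $\dim\psi-\iql\psi=\dim_{K^p}D_K(\psi)$ is indeed a direct consequence of Lemma~\ref{Lem:p-subform}(i), and since the common set $S=D_E(\varphi)=D_F(\pf a\otimes\varphi)$ is simultaneously an $E^p$-subspace and (by restriction of scalars) an $F^p$-subspace of $F$, the tower formula $\dim_{F^p}S=[E^p:F^p]\dim_{E^p}S=p\dim_{E^p}S$ applies, and the arithmetic gives $\iql{\pf a\otimes\varphi}=p\,\iql{\varphi_E}$. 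The subtlety you flag at the end — that $D_E(\varphi)$ really lives inside $F$ so that both scalar actions make sense on the same set — is exactly the point worth making explicit, and you handle it correctly. I see no gap.
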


For purely inseparable field extensions of higher exponents, the following is known. 

\begin{proposition}[{\cite[Prop.~5.7]{Hof04}}] \label{Prop:IsotropyInsepExtHof_pforms}
Let $r\geq1$. For each $1\leq i \leq r$, let $a_i\in F$, $n_i\geq1$, and $\alpha_i$, $\beta_i$ be such that $\alpha_i^p=\beta_i^{p^{n_i}}=a_i$. Furthermore, set $K=F(\alpha_1,\dots,\alpha_r)$, $L=F(\beta_1,\dots,\beta_r)$, and assume that $[K:F]=p^r$. Then $[L:F]=p^{n_1+\dots+n_r}$ and the quasi-Pfister form $\pi=\pf{a_1,\dots,a_r}$ is anisotropic. 

Let $\varphi$ be an anisotropic $p$-form. Then the following statements are equivalent.
\begin{enumerate}
\item $\varphi\otimes\pi$ is isotropic,
\item $\varphi_K$ is isotropic,
\item $\varphi_L$ is isotropic.
\end{enumerate}
\end{proposition}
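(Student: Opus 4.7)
The statement has four parts: $[L:F]=p^{n_1+\dots+n_r}$, anisotropy of $\pi$, and the three-way equivalence between isotropy of $\varphi\otimes\pi$, $\varphi_K$, and $\varphi_L$. The plan is to first settle the preliminary claims, then establish $(1)\Leftrightarrow(2)$ by iterating Lemma~\ref{Lemma:p-forms_BasicIsotropy}(ii), use $K\subseteq L$ for the trivial direction $(2)\Rightarrow(3)$, and finally prove $(3)\Rightarrow(2)$ directly via an explicit basis expansion.

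For the preliminaries: Lemma~\ref{Lemma:AlgebraicToThepPower}(ii) translates the hypothesis $[K:F]=p^r$ into $p$-independence of $\{a_1,\dots,a_r\}$ over $F$, and Corollary~\ref{Cor:pIndAndAnisPF} then gives anisotropy of $\pi$. The degree identity $[L:F]=p^{n_1+\dots+n_r}$ I would prove by induction on $r$. The base case $r=1$ is the standard fact that $X^{p^{n_1}}-a_1$ is irreducible over $F$ whenever $a_1\notin F^p$. For the inductive step, writing $L'=F(\beta_1,\dots,\beta_{r-1})$ and using the inductive hypothesis, it suffices to show $a_r\notin L'^p$, equivalently that $\sqrt[p]{a_r}\notin L'$. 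Assuming the contrary and expanding a hypothetical $p$-th root in the $F$-basis $\{\prod_{j<r}\beta_j^{e_j}\}$ of $L'$, raising to the $p$-th power and using $F^p$-linear independence of the monomials $\prod_{j<r}a_j^{h_j}$ (from $p$-independence of $\{a_1,\dots,a_{r-1}\}$) forces the root to lie in $F(\alpha_1,\dots,\alpha_{r-1})$, so $a_r\in F^p(a_1,\dots,a_{r-1})$, contradicting $p$-independence of the full set.

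The chain $(1)\Leftrightarrow(2)$ follows by iterating Lemma~\ref{Lemma:p-forms_BasicIsotropy}(ii) along the exponent-one tower $F\subseteq F(\alpha_1)\subseteq\cdots\subseteq F(\alpha_1,\dots,\alpha_r)=K$: at each stage the extension has genuine degree $p$ because $a_i\notin F^p(a_1,\dots,a_{i-1})=F(\alpha_1,\dots,\alpha_{i-1})^p$ by $p$-independence, so the lemma applies and yields an exact factor-of-$p$ reduction of the defect. After $r$ steps, $\iql{\varphi_K}=\frac{1}{p^r}\iql{\varphi\otimes\pi}$, so $(1)\Leftrightarrow(2)$. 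The implication $(2)\Rightarrow(3)$ is trivial, since an isotropic vector for $\varphi_K$ remains isotropic over $L$.

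The main work lies in $(3)\Rightarrow(2)$. Writing $\varphi\simeq\sqf{c_1,\dots,c_n}$ and supposing $\sum_i c_ix_i^p=0$ for some $(x_1,\dots,x_n)\in L^n\setminus\{0\}$, I would expand each $x_i=\sum_\mathbf{e}\lambda_{i,\mathbf{e}}\prod_j\beta_j^{e_j}$ in the $F$-basis of $L$ secured above, and perform the canonical split $e_j=s_j+h_jp^{n_j-1}$ with $0\leq s_j<p^{n_j-1}$ and $0\leq h_j<p$. Taking $p$-th powers gives $\beta_j^{pe_j}=a_j^{h_j}\beta_j^{ps_j}$, so regrouping the relation by $\mathbf{s}$ produces, for each $\mathbf{s}$, an equation $\sum_i c_i\tau_{i,\mathbf{s}}=0$ with $\tau_{i,\mathbf{s}}=\sum_\mathbf{h}\lambda_{i,\mathbf{s},\mathbf{h}}^p\prod_j a_j^{h_j}\in F^p(a_1,\dots,a_r)=K^p$; the separation is legitimate because the monomials $\prod_j\beta_j^{ps_j}$ form a subfamily of the $F$-basis of $L$ and are therefore $F$-linearly independent. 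Either some $(\tau_{i,\mathbf{s}})_i$ is nonzero, giving a nontrivial $K^p$-linear dependence among $c_1,\dots,c_n$ and hence isotropy of $\varphi_K$; or every $\tau_{i,\mathbf{s}}$ vanishes, in which case the $F^p$-linear independence of $\{\prod_j a_j^{h_j}\}$ (Lemma~\ref{Lem:p-independence}) forces every $\lambda_{i,\mathbf{s},\mathbf{h}}=0$, so $(x_1,\dots,x_n)=0$, a contradiction. The main obstacle is precisely the bookkeeping in this last step: the split $e_j=s_j+h_jp^{n_j-1}$ is engineered so that the $p$-th power sends the $h_j$-part into a factor lying in $K^p$, while the $s_j$-part keeps the remaining monomials $\prod_j\beta_j^{ps_j}$ linearly independent over $F$, which is what lets one extract a genuine $K^p$-relation from the single $L$-relation.
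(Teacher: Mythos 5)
Your proof is correct. Note that the paper cites this proposition from Hoffmann without reproducing a proof, so the natural comparison is with its strengthening, Theorem~\ref{Th:IsotropyInsepExtKZ_pforms}. Your iteration of Lemma~\ref{Lemma:p-forms_BasicIsotropy}(ii) along the exponent-one tower $F\subseteq F(\alpha_1)\subseteq\cdots\subseteq K$ is exactly how the paper obtains $\iql{\varphi_K}=\tfrac{1}{p^r}\iql{\varphi\otimes\pi}$ in Theorem~\ref{Th:IsotropyInsepExtKZ_pforms}(ii), and your preliminary reductions ($p$-independence of $\{a_1,\dots,a_r\}$ via Lemma~\ref{Lemma:AlgebraicToThepPower}, anisotropy of $\pi$ via Corollary~\ref{Cor:pIndAndAnisPF}) are the expected ones. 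For the key implication $(3)\Rightarrow(2)$, both your argument and the paper's proof of Theorem~\ref{Th:IsotropyInsepExtKZ_pforms}(i) separate an $L$-relation across an $F$-linearly independent family of basis monomials of $L$, but the organization differs: the paper reduces to $r=1$ by induction, expands a vector of $W_L$ in the $K$-basis $\{1,\beta,\dots,\beta^{p^{n-1}-1}\}$ of $L$, and uses $D_K(\varphi)\subseteq F$ from Lemma~\ref{Lemma:p-forms_BasicIsotropy}(i) to see that the coefficients $\varphi_K(w_i)$ are already $F$-rational before separating; you instead handle all $r$ at once in the $F$-basis $\{\prod_j\beta_j^{e_j}\}$ and split exponents $e_j=s_j+h_jp^{n_j-1}$ so that after raising to the $p$-th power the $h_j$-part lands in $K^p=F^p(a_1,\dots,a_r)$. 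The paper's route is slightly slicker because Lemma~\ref{Lemma:p-forms_BasicIsotropy}(i) does the bookkeeping; yours is more self-contained and the same exponent-splitting also reproves $[L:F]=p^{n_1+\cdots+n_r}$ by an almost identical computation. Both end steps of your dichotomy — a nontrivial $K^p$-dependence among $c_1,\dots,c_n$ giving isotropy of $\varphi_K$, or vanishing of all $\lambda_{i,\mathbf{s},\mathbf{h}}$ by Lemma~\ref{Lem:p-independence}(iii) — are correct.
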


Recently, in \cite[Cor.~3.3]{LagMuk23}, it has been proved that in the situation from Proposition~\ref{Prop:IsotropyInsepExtHof_pforms} in case of $p=2$, not only isotropy, but also quasi-hyperbolicity of totally singular quadratic forms is preserved. (A totally singular quadratic form $\varphi$ is called \emph{quasi-hyperbolic} if $\iql{\varphi}\geq\frac{\dim\varphi}{2}$.)

Note that Proposition~\ref{Prop:IsotropyInsepExtHof_pforms} does not cover all purely inseparable extensions of exponent higher than one; only those extensions $L/F$ are considered, for which $L=F(\!\sqrt[p^{n_1}]{a_1},\dots,\sqrt[p^{n_r}]{a_r})$ with $\pf{a_1,\dots,a_r}$ anisotropic (because the set $\{a_1\dots,a_r\}$ is $p$-independent; this follows from the assumption $[K:F]=p^r$). Purely inseparable field extensions for which such elements $a_1,\dots,a_r$ can be found are called \emph{modular}. A known example of a non-modular extension is $E/F$ with $F=\F_2(a,b,c)$ for some algebraically independent elements  $a,b,c$, and $E=F(\sqrt[4]{a},\sqrt[4]{b^2a+c^2})$, see \cite{Nonmodular}.

However, we can strengthen the results by comparing the values $\iql{\varphi\otimes\pi}$, $\iql{\varphi_K}$ and $\iql{\varphi_L}$; it turns out that we can prove $\iql{\varphi_K}=\iql{\varphi_L}$ even if the extension $L/F$ is not modular.

\begin{theorem} \label{Th:IsotropyInsepExtKZ_pforms}
Let $r\geq1$. For each $1\leq i \leq r$, let $a_i\in F$, $n_i\geq1$, and $\alpha_i$, $\beta_i$ be such that $\alpha_i^p=\beta_i^{p^{n_i}}=a_i$. Furthermore, set $K=F(\alpha_1,\dots,\alpha_r)$,  $L=F(\beta_1,\dots,\beta_r)$ and $\pi=\pf{a_1,\dots,a_r}$. Let $\varphi$ be an anisotropic $p$-form. Then
\begin{enumerate}
	\item $\iql{\varphi_K}=\iql{\varphi_L}$,
	\item $\iql{\varphi_K}=\frac{1}{p^r}\iql{\varphi\otimes\pi}$ if $\pi$ is anisotropic.
\end{enumerate}
\end{theorem}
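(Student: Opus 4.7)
For Part (2), the starting observation is that the quasi-Pfister form $\pi$ satisfies $D_F(\pi) = F^p(a_1,\ldots,a_r) = K^p$ (since $\alpha_i^p = a_i$). Setting $V = D_F(\varphi)\subseteq F$, one computes $D_F(\varphi\otimes\pi) = D_F(\pi)\cdot V = K^p\cdot V$, and this coincides with $D_K(\varphi)$ viewed inside $K$ via $F\hookrightarrow K$. Since $K^p\subseteq F$, the common set $K^p\cdot V$ actually lies entirely in $F$. With $[K^p:F^p] = p^r$ (equivalent to anisotropy of $\pi$ by Corollary~\ref{Cor:pIndAndAnisPF}), the dimension formula $\dim_{F^p}(K^p\cdot V) = [K^p:F^p]\cdot\dim_{K^p}(K^p\cdot V)$ reads $\dim(\varphi\otimes\pi)_{\an} = p^r\dim(\varphi_K)_{\an}$, which rearranges into Part (2).

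For Part (1), I would reduce $\iql{\varphi_K} = \iql{\varphi_L}$ to a linear-disjointness statement. The inclusion $K\subseteq L$ gives $D_L(\varphi) = L^p\cdot V = L^p\cdot D_K(\varphi)$, so the natural surjection $L^p\otimes_{K^p}D_K(\varphi)\twoheadrightarrow D_L(\varphi)$ is an isomorphism precisely when $\dim_{K^p}D_K(\varphi) = \dim_{L^p}D_L(\varphi)$, i.e.\ precisely when $\iql{\varphi_K} = \iql{\varphi_L}$. Because $D_K(\varphi)\subseteq F$ (as noted above), injectivity of this tensor map follows from injectivity of the multiplication map $L^p\otimes_{K^p}F\to L$, that is, from the linear disjointness of $L^p$ and $F$ over $K^p$ inside $L$, equivalently the degree identity $[L^p\cdot F:F] = [L^p:K^p]$.

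The main obstacle is verifying this degree identity. Both sides should equal $p^{N-r}$ with $N := n_1 + \cdots + n_r$: the right-hand side is $[L:K]$ via Lemma~\ref{Lemma:AlgebraicToThepPower}, and together with $[K:F] = p^r$ and $[L:F] = p^N$ from Proposition~\ref{Prop:IsotropyInsepExtHof_pforms} this yields $p^{N-r}$. For the left-hand side, $L^p\cdot F = F(\beta_1^p,\ldots,\beta_r^p)$; each $\beta_i^p$ is a $p^{n_i-1}$-th root of $a_i$, so reapplying Proposition~\ref{Prop:IsotropyInsepExtHof_pforms}---to the same anisotropic $\pi$ but with reduced exponents $(n_1-1,\ldots,n_r-1)$---produces $[F(\beta_1^p,\ldots,\beta_r^p):F] = p^{(n_1-1)+\cdots+(n_r-1)} = p^{N-r}$, as needed. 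The delicate point is that both degree computations rely on the anisotropy of $\pi$ to reach their maximal value; the non-obvious containment $D_K(\varphi)\subseteq F$ is what lets us cut the disjointness question down from $L^p$ vs.\ $K$ to $L^p$ vs.\ $F$, where these degree bounds are actually available.
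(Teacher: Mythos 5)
Your proof of Part~(ii) is correct and is essentially the content of the paper's argument: the identity $D_K(\varphi)=D_F(\varphi\otimes\pi)=K^p\cdot D_F(\varphi)$ is exactly what one obtains by iterating Lemma~\ref{Lemma:p-forms_BasicIsotropy}\ref{Lemma:p-forms_BasicIsotropy_i}, and your dimension count over $F^p$ vs.\ $K^p$ reproduces the chain of equalities $\iql{\varphi_K}=p^r\iql{\varphi\otimes\pi}$ in one step rather than $r$.

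Part~(i) is where you diverge from the paper, and where there is a genuine gap. Your reduction of $\iql{\varphi_K}=\iql{\varphi_L}$ to injectivity of $L^p\otimes_{K^p}F\to L$ (via the observation $D_K(\varphi)\subseteq F$) is a nice reformulation, but to verify the degree identity $[L^p\cdot F:F]=[L^p:K^p]$ you invoke Proposition~\ref{Prop:IsotropyInsepExtHof_pforms}, and that proposition has the standing hypothesis $[K:F]=p^r$, i.e.\ that $\pi$ is anisotropic. Part~(i) of the theorem deliberately drops this hypothesis; that is precisely the strengthening over Hoffmann's \cite[Prop.~5.7]{Hof04} emphasized in the discussion preceding the theorem (covering non-modular purely inseparable extensions, where the set $\{a_1,\dots,a_r\}$ cannot be chosen $p$-independent). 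When $\pi$ is isotropic neither $[K:F]=p^r$ nor $[L:F]=p^{n_1+\dots+n_r}$ is available, so both of your degree computations break down, and you have no argument for linear disjointness of $L^p$ and $F$ over $K^p$. In other words, your Part~(i) proves a defect-refined version of the already-known modular case, but not the new content of the theorem. The paper avoids the degree bookkeeping entirely: for $r=1$ it works directly with the maximal isotropic subspaces, writing a vector $w\in W_L$ in the $K$-basis $\{1,\beta,\dots,\beta^{p^{n-1}-1}\}$ of $L$, using that $\varphi_K$ takes values in $F$ to split $\varphi_L(w)=0$ into coordinate equations $\varphi_K(w_i)=0$, and concluding $W_L=L\otimes_K W_K$; for $r>1$ it chains through intermediate fields $L_i=F(\beta_1,\dots,\beta_i,\alpha_{i+1},\dots,\alpha_r)$. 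If you want to keep your linear-disjointness framework, you would need an independent argument that the multiplication map $L^p\otimes_{K^p}F\to L$ is injective in the generality the theorem requires; as it stands, the step you flag as ``the main obstacle'' is not actually cleared outside the anisotropic case.
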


\begin{proof}
To prove (i), we proceed by induction on $r$. Let $r=1$, and write $a=a_1$, $n=n_1$, $\alpha=\alpha_1$ and $\beta=\beta_1$. Let $V$ be the vector space associated with $\varphi$, and write $V_K=K\otimes_F V$ and $V_L=L\otimes_F V\simeq L\otimes_K V_K$ as usual. 
Let $W_K=\{w\in V_K~|~\varphi_K(w)=0\}$ (resp. $W_L=\{w\in V_L~|~\varphi_L(w)=0\}$) be the maximal isotropic subspace of $V_K$ (resp. $V_L$); then $\iql{\varphi_K}=\dim_KW_K$ (resp. $\iql{\varphi_L}=\dim_LW_L$). We will show that $W_L= L\otimes_K W_K$, which will not only justify the notation but also prove the claim because we have $\dim_L(L\otimes_KW_K)=\dim_KW_K$.

Let $w=\sum_{i=1}^k\gamma_i\otimes w_i\in L\otimes_KW_K$ for some $\gamma_i\in L$ and $w_i\in W_K$; then
\[\varphi_L(w)=\varphi_L\left(\sum_{i=1}^k\gamma_i\otimes w_i\right)=\sum_{i=1}^k\gamma_i^p\varphi_K(w_i)=0.\] 
Hence, $w\in W_L$, and so we get $L\otimes_KW_K\subseteq W_L$.

For a proof of the opposite inclusion, first note that $\varphi_K(v)\in F$ for any $v\in V_K$ by Lemma~\ref{Lemma:p-forms_BasicIsotropy}\ref{Lemma:p-forms_BasicIsotropy_i}.
Now, let $w\in W_L$. Since $\{1,\beta,\beta^2,\dots,\beta^{p^{n-1}-1}\}$ is a basis of $L$ over $K$, we can write 
\[w=\sum_{i=0}^{p^{n-1}-1}\beta^i\otimes w_i\] 
with $w_i\in V_K$. Then
\[0=\varphi_L(w)=\sum_{i=0}^{p^{n-1}-1}\beta^{pi}\varphi_K(w_i).\]
Note that the set $B=\{1,\beta^p, \beta^{2p}, \dots,\beta^{p^n-p}\}$ is a subset of $\{1,\beta, \dots,\beta^{p^n-1}\}$, which is a basis of $L$ over $F$; therefore, $B$ is linearly independent over $F$. Since, as we noted, $\varphi_K(w_i)\in F$ for all the $i$'s, it follows that $\varphi_K(w_i)=0$. Thus, we have $w_i\in W_K$ for all $i$, and hence $w\in L\otimes_KW_K$. It follows that $W_L\subseteq L\otimes_KW_K$, which concludes the proof in the case $r=1$.

\smallskip

Let $r>1$; for $0\leq i \leq r$, set 
\[L_i=F(\beta_1,\dots,\beta_i,\alpha_{i+1},\dots,\alpha_r).\]
Then $L_0=K$, $L_r=L$, and $\iql{\varphi_{L_i}}=\iql{\varphi_{L_{i+1}}}$ for all $0\leq i\leq r-1$ by the previous part of the proof. Therefore, $\iql{\varphi_K}=\iql{\varphi_L}$.

\smallskip

(ii) Set $K_i=F(\alpha_1,\dots,\alpha_i)$ for $0\leq i\leq r$. Since $\pi$ is anisotropic by the assumption, we have $[K_{i+1}:K_i]=p$ for all $0\leq i\leq r-1$. Now the equality $\iql{\varphi\otimes\pi}=\frac{1}{p^r}\iql{\varphi_K}$ follows by a repeated application of Lemma~\ref{Lemma:p-forms_BasicIsotropy}:
\begin{multline*}
\iql{\varphi_K}=\iql{\varphi_{K_r}}=p\,\iql{(\pf{a_r}\otimes\varphi)_{K_{r-1}}}=\dots\\
=p^r\iql{(\pf{a_1,\dots,a_r}\otimes\varphi)_{K_{0}}}=p^r\iql{\varphi\otimes\pi}. \qedhere
\end{multline*}
\end{proof}

Next to the standard and full splitting pattern, we can define \emph{purely inseparable splitting pattern} of a $p$-form $\varphi$ over $F$ as
\[\pisp{\varphi}=\{\dim(\varphi_L)_\an~|~L/F \text{ a purely inseparable extension}\}.\]
By the previous theorem, we only need to consider purely inseparable extensions of exponent one.

\begin{corollary}\label{Cor:SimplifyFSP_pforms}
Let $\varphi$ be a $p$-form over $F$. Then
\[\pisp{\varphi}=\{\dim(\varphi_K)_{\an}~|~ K/F \text{ finite purely inseparable of exponent } 1\}.\]
\end{corollary}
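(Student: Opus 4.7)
The inclusion $\supseteq$ is immediate from the definition of $\pisp{\varphi}$, so the task is to show that every value of $\dim(\varphi_L)_{\an}$ for a (possibly infinite) purely inseparable $L/F$ is already realized by a finite purely inseparable extension of exponent $1$. The plan is to reduce to a finitely generated subextension of $L$ and then apply Theorem~\ref{Th:IsotropyInsepExtKZ_pforms}(i) to pass from exponent ${\leq n_i}$ generators to exponent-one generators.

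Write $\varphi\simeq\sqf{a_1,\dots,a_n}$ with $a_i\in F$. By Lemma~\ref{Lemma:PickAnisp-subform}, the quantity $\dim(\varphi_L)_{\an}$ equals $\dim_{L^p}D_L(\varphi)=\dim_{L^p}(L^pa_1+\dots+L^pa_n)$; equivalently, $\iql{\varphi_L}=n-\dim_{L^p}D_L(\varphi)$ equals the maximal number of $L^p$-linearly independent relations $\sum_{i=1}^n\lambda_{ji}^p a_i=0$ with $\lambda_{ji}\in L$. Collecting the finitely many scalars $\lambda_{ji}$ appearing in a maximal such family, I set $L'=F(\lambda_{ji}\mid j,i)$. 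Then $L'/F$ is finitely generated and purely inseparable, all the chosen relations already hold over $L'$, and so $\dim_{(L')^p}D_{L'}(\varphi)\leq\dim_{L^p}D_L(\varphi)$; the opposite inequality is automatic from $L'\subseteq L$. Hence $\dim(\varphi_{L'})_{\an}=\dim(\varphi_L)_{\an}$.

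Since $L'/F$ is finitely generated and purely inseparable, I can write $L'=F(\beta_1,\dots,\beta_r)$ with $\beta_i^{p^{n_i}}=:a_i'\in F$ for suitable $n_i\geq 1$. Choose $\alpha_i$ with $\alpha_i^p=a_i'$ and set $K=F(\alpha_1,\dots,\alpha_r)$; this $K/F$ is a finite purely inseparable extension of exponent~$1$. Applying Theorem~\ref{Th:IsotropyInsepExtKZ_pforms}(i) to the data $(a_i',\alpha_i,\beta_i,n_i)$ yields $\iql{\varphi_K}=\iql{\varphi_{L'}}=\iql{\varphi_L}$, so $\dim(\varphi_K)_{\an}=\dim(\varphi_L)_{\an}$, as required.

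The only step that requires genuine care is the reduction to a finitely generated subextension; everything else is a direct appeal to Theorem~\ref{Th:IsotropyInsepExtKZ_pforms}(i). Note in particular that modularity of $L/F$ plays no role here: the proof works through the trick of replacing each $\beta_i$ (of possibly high exponent) by $\alpha_i$ of exponent one, exactly the strengthening that part~(i) of Theorem~\ref{Th:IsotropyInsepExtKZ_pforms} makes available.
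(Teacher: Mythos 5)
Your proof is correct and takes exactly the approach the paper intends: the corollary is stated without proof as an immediate consequence of Theorem~\ref{Th:IsotropyInsepExtKZ_pforms}(i), and the intended argument is precisely the reduction you carry out — pass to a finitely generated purely inseparable subextension $L'$ that realizes the same defect, write $L'=F(\beta_1,\dots,\beta_r)$ with $\beta_i^{p^{n_i}}\in F$, and replace each $\beta_i$ by an exponent-one root $\alpha_i$ of $\beta_i^{p^{n_i}}$ via part (i) of the theorem. The only cosmetic omission is that you should first replace $\varphi$ by $\varphi_\an$ (using that $(\varphi_E)_\an\simeq((\varphi_\an)_E)_\an$), since the theorem is stated for anisotropic $\varphi$; this does not affect the substance of the argument.
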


\begin{remark}
We can restrict the field extensions necessary to determine $\pisp{\varphi}$ a bit more: Let $\varphi$ be anisotropic with $\ndeg_F\varphi=p^n$ and $N_F(\varphi)=F^p(a_1,\dots,a_n)$. Consider the field $K=F(\sqrt[p]{b_1},\dots,\sqrt[p]{b_r})$, write $\pi\simeq\pf{b_1,\dots,b_r}$, and assume that $\pi$ is anisotropic. Then we know from Proposition~\ref{Prop:IsotropyInsepExtHof_pforms} that $\varphi_K$ is isotropic if and only if $\varphi\otimes\pi$ is isotropic. By Lemma~\ref{Lemma:AnisotropicProduct_pforms}, this can happen only if $\ndeg_F(\varphi\otimes\pi)<\ndeg_F(\varphi)\cdot\ndeg_F(\pi)$, i.e., if the set $\{a_1,\dots,a_n,b_1,\dots,b_r\}$ is $p$-dependent. Thus, we have
\[\pisp{\varphi}=\{\dim(\varphi_K)_\an~|~ [K^p(a_1,\dots,a_n):F^p]<p^n[K^p:F^p]\},\]
where $K$ runs over purely inseparable extensions of $F$ of exponent one.

This also indicates that a $p$-form may become isotropic even over a field $E$ such that $E^p\cap N_F(\varphi)=F^p$. For example, consider a set $\{a_1,a_2,a_3,b_1\}$ $p$-independent over $F$, and write $\varphi\simeq\sqf{1,a_1,a_2,a_3}$ and $E=F(\sqrt[p]{b_1},\sqrt[p]{b_2})$ with $b_2=\frac{a_1b_1+a_3}{a_2}$. It can be shown that indeed $E^p\cap N_F(\varphi)=F^p$. Moreover, $\sqf{a_1b_1,a_2b_2,a_3}\subseteq\varphi\otimes\pf{b_1,b_2}$,
and since $a_3=a_1b_1+a_2b_2$, the form $\sqf{a_1b_1,a_2b_2,a_3}$ is isotropic. Therefore, the $p$-form $\varphi\otimes\pf{b_1,b_2}$ is isotropic, and so is $\varphi_E$.
\end{remark}

\section{Full splitting pattern} \label{Sec:fsp}

We define the \emph{full splitting pattern} of a $p$-form $\varphi$ over $F$ as
\[\fullsplitpat{\varphi}=\{\dim(\varphi_E)_{\an}~|~E/F \text{ a field extension}\}.\]
The goal of this section is to determine the full splitting pattern at least of some families of $p$-forms.

\subsection[Bounds on the size of \texorpdfstring{$\fsp{\varphi}$}{fsp(varphi)}]{Bounds on the size of ${\fsp{\varphi}}$}

Putting aside the trivial case when $\varphi_\an$ is the zero form, it is obvious that $\fsp{\varphi}\subseteq\{1,2,\dots,\dim\varphi\}$, and hence $\abs{\fsp{\varphi}}\leq\dim\varphi$. We will show that the bound cannot be improved. Moreover, we provide a lower bound for $\abs{\pisp{\varphi}}$; since obviously $\pisp{\varphi}\subseteq\fsp{\varphi}$, we will also get a lower bound for $\abs{\fsp{\varphi}}$.

\bigskip

For a given $p$-form $\varphi$, we construct a tower of fields over which the defect of $\varphi$ is strictly increasing similarly as over the standard splitting tower. To do that, we put a seemingly strong assumption on the $p$-form. But by Corollary~\ref{Cor:BasisSubform}, all $p$-forms which represent one fulfill this assumption.

\begin{lemma} \label{Lemma:TotInsepTower}
Let $\varphi=\sqf{1,a_1,\dots,a_n}$ be a $p$-form over $F$, and assume that the set $\{a_1, \dots, a_m\}$ is a $p$-basis of the field $N_F(\varphi)$ over $F$ for some $m\leq n$. We denote $E_0=F$ and $E_i=F\bigl(\sqrt[p]{a_1}, \dots, \sqrt[p]{a_i}\bigr)$ for $1\leq i \leq m$. Then we have $\iql{\varphi_{E_{i+1}}}>\iql{\varphi_{E_i}}$ and $\sqf{1,a_{i+1},\dots,a_m}_{E_i}\subseteq(\varphi_{E_i})_{\an}$ for any $0\leq i \leq m-1$. 
\end{lemma}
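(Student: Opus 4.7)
The plan is to reduce both assertions to a careful analysis of the subform $\sqf{1, a_{i+1}, \dots, a_m}$ of $\varphi$ after scalar extension to $E_i$. First I would pin down $E_i^p$ explicitly. Since every element of $E_i = F(\sqrt[p]{a_1}, \dots, \sqrt[p]{a_i})$ is an $F$-linear combination of monomials in the $\sqrt[p]{a_j}$, raising to the $p$-th power (in characteristic $p$) gives $E_i^p = F^p(a_1, \dots, a_i)$. Combining this with the tower formula and the $p$-independence of $\{a_1, \dots, a_m\}$ over $F$, one obtains
\[
[E_i^p(a_{i+1}, \dots, a_m) : E_i^p] = \frac{[F^p(a_1, \dots, a_m) : F^p]}{[F^p(a_1, \dots, a_i) : F^p]} = p^{m-i},
\]
so $\{a_{i+1}, \dots, a_m\}$ is $p$-independent over $E_i$. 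Corollary~\ref{Cor:pIndAndAnisPF} then ensures that $\pf{a_{i+1}, \dots, a_m}_{E_i}$ is anisotropic, and therefore so is its subform $\sqf{1, a_{i+1}, \dots, a_m}_{E_i}$.

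For assertion (b), the obvious embedding $\sqf{1, a_{i+1}, \dots, a_m} \subseteq \varphi$ over $F$ extends to a subform relation $\sqf{1, a_{i+1}, \dots, a_m}_{E_i} \subseteq \varphi_{E_i}$. Since this subform is anisotropic, its diagonal entries form an $E_i^p$-linearly independent subset of $D_{E_i}(\varphi_{E_i}) = D_{E_i}((\varphi_{E_i})_\an)$, and Lemma~\ref{Lem:p-subform}(ii) (applied inside $(\varphi_{E_i})_\an$) yields the desired containment $\sqf{1, a_{i+1}, \dots, a_m}_{E_i} \subseteq (\varphi_{E_i})_\an$.

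For assertion (a), I would start from the decomposition $\varphi_{E_i} \simeq (\varphi_{E_i})_\an \ort \iql{\varphi_{E_i}} \times \sqf{0}$ and extend scalars to $E_{i+1}$ to obtain the identity
\[
\iql{\varphi_{E_{i+1}}} - \iql{\varphi_{E_i}} = \iql{\bigl((\varphi_{E_i})_\an\bigr)_{E_{i+1}}}.
\]
It then suffices to show that $(\varphi_{E_i})_\an$ acquires some defect over $E_{i+1}$. But by (b) its anisotropic part contains $\sqf{1, a_{i+1}}_{E_i}$, and over $E_{i+1}$ the element $a_{i+1} = (\sqrt[p]{a_{i+1}})^p$ is a $p$-th power, so $\sqf{1, a_{i+1}}_{E_{i+1}}$ already has positive defect, which forces $((\varphi_{E_i})_\an)_{E_{i+1}}$ to be isotropic.

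The main obstacle is the opening identification of $E_i^p$ together with the degree computation that establishes the $p$-independence of $\{a_{i+1}, \dots, a_m\}$ over $E_i$; once this is in place, both (a) and (b) drop out by routine bookkeeping using the tools already set up in Section~\ref{Sec:Prel_p-forms}.
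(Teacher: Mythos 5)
Your proposal is correct and follows essentially the same strategy as the paper's proof: establish that $\sqf{1,a_{i+1},\dots,a_m}_{E_i}$ is anisotropic, deduce from Lemma~\ref{Lem:p-subform} that it sits inside $(\varphi_{E_i})_\an$, and then observe that it (hence $(\varphi_{E_i})_\an$ itself) becomes isotropic over $E_{i+1}$. The one place you diverge is in the anisotropy step: the paper argues by contradiction directly from Lemma~\ref{Lem:p-independence} (if the form were isotropic, some $a_k$ would lie in $F^p(a_1,\dots,\widehat{a_k},\dots,a_m)$, contradicting the $p$-basis hypothesis), whereas you first identify $E_i^p=F^p(a_1,\dots,a_i)$, run a tower degree count to see that $\{a_{i+1},\dots,a_m\}$ stays $p$-independent over $E_i$, and then invoke Corollary~\ref{Cor:pIndAndAnisPF} for the quasi-Pfister form $\pf{a_{i+1},\dots,a_m}_{E_i}$. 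These are the same fact seen from two sides, and your explicit defect identity $\iql{\varphi_{E_{i+1}}}-\iql{\varphi_{E_i}}=\iql{((\varphi_{E_i})_\an)_{E_{i+1}}}$ is a clean way to package what the paper leaves implicit; both versions are sound.
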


\begin{proof}

Let $i\in\{0,\dots,m-1\}$ and set $\tau_i= (\varphi_{E_i})_{\an}$. Since $D_F(\varphi)\subseteq D_{E_i}(\tau_i)$, it follows from Lemma \ref{Lem:p-subform} that 
\[(\sqf{1, a_{i+1},\dots,a_m}_{E_i})_{\an}\subseteq \tau_i,\]
and so we only need to prove that the $p$-form $\sqf{1,a_{i+1},\dots,a_m}_{E_i}$ is anisotropic over $E_i$. Suppose not; then we can find $k\in\{i+1,\dots,m\}$, such that 
\[a_k\in\spn_{E_i^p}\{1,a_{i+1}, \dots, \widehat{a_k}, \dots, a_m\}.\] 
But that implies $a_k\in F^p(a_1,\dots,\widehat{a_k}, \dots, a_m)$, which is a contradiction with the choice of $\{a_1,\dots, a_m\}$ as a $p$-basis over $F$ by Lemma~\ref{Lem:p-independence}. 

Furthermore, the $p$-form $\sqf{1, a_{i+1},\dots,a_m}$ becomes isotropic over the field $E_{i+1}$, and hence so does $\tau_i$. It follows that $\iql{\varphi_{E_{i+1}}}>\iql{\varphi_{E_i}}$.
\end{proof}

As a corollary to the previous lemma, we get a lower bound on the size of the sets $\pisp{\varphi}$ and $\fsp{\varphi}$. 

\begin{corollary} \label{Cor:FSPlowerBound}
Let $\varphi$ be a $p$-form over $F$, and suppose that $\ndeg_F\varphi=p^m$. Then $\abs{\pisp{\varphi}}\geq m+1$ and $\abs{\fullsplitpat{\varphi}}\geq m+1$. 
\end{corollary}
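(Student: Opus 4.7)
The plan is to apply Lemma~\ref{Lemma:TotInsepTower} essentially directly: build an explicit tower of $m$ simple purely inseparable exponent-one extensions along which the defect of $\varphi$ strictly increases, so that the corresponding $m+1$ anisotropic dimensions are pairwise distinct elements of $\pisp{\varphi}\subseteq\fullsplitpat{\varphi}$.

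First I would reduce to the case where $\varphi$ is anisotropic and satisfies $1\in D_F^*(\varphi)$. Passing to $\varphi_{\an}$ changes neither $\pisp{\varphi}$ nor $\fullsplitpat{\varphi}$ nor $\ndeg_F\varphi$: for any extension $E/F$, the decomposition $\varphi\simeq\varphi_{\an}\ort\iql{\varphi}\times\sqf{0}$ shows $\dim(\varphi_E)_{\an}=\dim((\varphi_{\an})_E)_{\an}$, and the norm field is defined via $\varphi_{\an}$. Further scaling by some $c\in D_F^*(\varphi_{\an})^{-1}$ is also harmless since $c\varphi_E\simeq\varphi_E$ as anisotropic dimensions are concerned. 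After this normalisation, Corollary~\ref{Cor:BasisSubform} lets me write $\varphi\simeq\sqf{1,a_1,\dots,a_n}$ with $\{a_1,\dots,a_m\}$ a $p$-basis of $N_F(\varphi)$ over $F$, which is exactly the input required by Lemma~\ref{Lemma:TotInsepTower}.

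Next I would set $E_0=F$ and $E_i=F(\sqrt[p]{a_1},\dots,\sqrt[p]{a_i})$ for $1\leq i\leq m$. Lemma~\ref{Lemma:TotInsepTower} then gives the strict chain
\[\iql{\varphi_{E_0}}<\iql{\varphi_{E_1}}<\dots<\iql{\varphi_{E_m}},\]
so the values $\dim(\varphi_{E_i})_{\an}=\dim\varphi-\iql{\varphi_{E_i}}$ for $i=0,1,\dots,m$ are $m+1$ pairwise distinct elements. Each $E_i$ is a finite purely inseparable extension of $F$ of exponent at most one, so all these values belong to $\pisp{\varphi}$. This yields $\abs{\pisp{\varphi}}\geq m+1$, and the inclusion $\pisp{\varphi}\subseteq\fullsplitpat{\varphi}$ immediately upgrades this to the same bound for $\abs{\fullsplitpat{\varphi}}$.

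The degenerate case $m=0$ deserves a brief comment: by Lemma~\ref{Lemma:PrereqForNormForm}, $\ndeg_F\varphi=1$ forces $\dim\varphi_{\an}=1$, so $\fullsplitpat{\varphi}\supseteq\{1\}$ and the bound holds trivially. There is no substantial obstacle in this argument — the real content is packaged in Lemma~\ref{Lemma:TotInsepTower}, which already produces the strictly ascending defects along the canonical inseparable tower determined by a $p$-basis of $N_F(\varphi)$ over $F$. The only mild care needed is the preliminary reduction to a form $\sqf{1,a_1,\dots,a_n}$ whose first $m$ nontrivial coefficients form such a $p$-basis, which is an immediate application of Corollary~\ref{Cor:BasisSubform}.
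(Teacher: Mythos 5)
Your proof is correct and takes essentially the same approach as the paper: reduce (via the similarity-invariance of the splitting patterns and Corollary~\ref{Cor:BasisSubform}) to a form $\sqf{1,a_1,\dots,a_n}$ whose first $m$ coefficients form a $p$-basis of $N_F(\varphi)$ over $F$, then read off the $m+1$ pairwise distinct anisotropic dimensions along the exponent-one purely inseparable tower $F=E_0\subseteq\dots\subseteq E_m$ supplied by Lemma~\ref{Lemma:TotInsepTower}. The paper's proof is just a terser version of this; your added remarks on the reduction and the $m=0$ case are correct but not substantive departures.
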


\begin{proof}
Since the splitting patterns do not depend on the choice of the similarity class, we can assume without loss of generality that $1\in D_F(\varphi)$; Then, invoking Corollary~\ref{Cor:BasisSubform}, we can suppose that $\varphi$ is as in Lemma~\ref{Lemma:TotInsepTower}. Using the fields $E_i$ for $0\leq i\leq m$ from that lemma, it follows that we have $\dim(\varphi_{E_i})_{\an}\neq\dim(\varphi_{E_j})_{\an}$ for any $i\neq j$. The claim follows. 
\end{proof}

\begin{remark}
(i) The same lower bound on $\abs{\fullsplitpat{\varphi}}$ as in Corollary~\ref{Cor:FSPlowerBound} could be obtained through the standard splitting tower: Denote
\[\ssp{\varphi}=\{\dim(\varphi_K)_{\an}~|~K \text{ a field in the standard splitting tower of }\varphi\}.\]
By \cite[Lemma~4.3]{Scu16-Split}, it holds that $\ndeg_{F(\varphi)}(\varphi_{F(\varphi)})_\an=\frac{1}{p}\ndeg_F\varphi$; therefore, it follows that $\abs{\ssp{\varphi}}=m+1$ where $\ndeg_F\varphi=p^m$. Since $\ssp{\varphi}\subseteq\fsp{\varphi}$, we get $\abs{\fsp{\varphi}}\geq m+1$.

(ii) The tower of fields $F=E_0\subseteq E_1 \subseteq\dots\subseteq E_m$ in Lemma~\ref{Lemma:TotInsepTower} might look like a purely inseparable analogue of the standard splitting tower (note that they are of the same length). But the values of $\iql{\varphi_{E_i}}$ depend on the ordering of $a_1,\dots,a_m$, so in particular we cannot expect that ${\iql{\varphi_{E_1}}=\iql{\varphi_{F(\varphi)}}}$.

For example, if $\varphi\simeq\pf{a_1,a_2}\ort a_3\sqf{1,a_1}$ for some set $\{a_1,a_2,a_3\}\subseteq F^*$ that is $p$-independent over $F$, then $(\varphi_{F(\sqrt[p]{a_1})})_{\an}\simeq(\pf{a_2}\ort a_3\sqf{1})_{F(\sqrt[p]{a_1})}$. The $p$-form $(\varphi_{F(\varphi)})_{\an}\simeq\pf{a_1,a_2}_{F(\varphi)}$ corresponds rather to $(\varphi_{F(\sqrt[p]{a_3})})_{\an}\simeq\pf{a_1,a_2}_{F(\sqrt[p]{a_3})}$.
\end{remark}

The following example shows that the lower bound from Corollary~\ref{Cor:FSPlowerBound} is optimal.

\begin{example} \label{Ex:fspMin}
Let $\varphi\simeq\sqf{1,a_1,\dots,a_m}$ be a minimal $p$-form over $F$ (i.e., the set $\{a_1,\dots,a_m\}$ is $p$-independent over $F$). Then $\ndeg_F{\varphi}=p^m$ and by Lemma~\ref{Lemma:TotInsepTower}, we have
\[\fullsplitpat{\varphi}=\{1,2,\dots,m+1\}. \]
\end{example}

\subsection{Full splitting pattern of quasi-Pfister forms}

To determine the full splitting pattern of quasi-Pfister forms, we start by looking at their behavior over purely inseparable field extensions.

\begin{lemma}
\label{Lem:PFinsep1}
Let $\pi$ and $\pi'$ be anisotropic quasi-Pfister forms over $F$ such that $\pi\simeq\pf{a}\otimes\pi'$ for some $a\in F$, and let $E=F(\sqrt[p]{a})$. Then $\pi'_E$ is anisotropic. 
\end{lemma}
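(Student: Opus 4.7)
The plan is to deduce the claim directly from Lemma~\ref{Lemma:p-forms_BasicIsotropy}(ii), which expresses the defect of a $p$-form over $F(\sqrt[p]{a})$ as $\tfrac{1}{p}$ times the defect of its scalar extension by $\pf{a}$.

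First I would verify that the hypothesis of Lemma~\ref{Lemma:p-forms_BasicIsotropy} is met, namely that $a \notin F^p$. This is automatic from the assumptions: since $\pi \simeq \pf{a} \otimes \pi'$ is anisotropic, its subform $\pf{a}$ is anisotropic, and Corollary~\ref{Cor:pIndAndAnisPF} (applied to the single element $a$) then forces $a \notin F^p$.

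With this in hand, Lemma~\ref{Lemma:p-forms_BasicIsotropy}(ii) applied to the $p$-form $\pi'$ gives
\[
\iql{\pi'_E} \;=\; \tfrac{1}{p}\,\iql{\pf{a} \otimes \pi'} \;=\; \tfrac{1}{p}\,\iql{\pi}.
\]
Since $\pi$ is anisotropic, $\iql{\pi} = 0$, hence $\iql{\pi'_E} = 0$, i.e., $\pi'_E$ is anisotropic, which is the claim.

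There is essentially no obstacle: the result is a one-line corollary of Lemma~\ref{Lemma:p-forms_BasicIsotropy}(ii) once one observes that $a \notin F^p$ is forced by the anisotropy of $\pi$. As a sanity check, an alternative route avoiding that lemma would go through $p$-independence and Corollary~\ref{Cor:pIndAndAnisPF}: writing $\pi' \simeq \pf{a_1,\dots,a_n}$, the anisotropy of $\pi$ means $\{a, a_1, \dots, a_n\}$ is $p$-independent over $F$; using $E^p = F^p(a)$ and the tower law, one finds $[E^p(a_1,\dots,a_n):E^p] = p^{n+1}/p = p^n$, so $\{a_1,\dots,a_n\}$ is $p$-independent over $E$ and $\pi'_E$ is anisotropic by Corollary~\ref{Cor:pIndAndAnisPF} again.
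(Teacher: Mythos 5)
Your primary argument is correct and takes a genuinely different route from the paper. The paper proves the lemma by contraposition using $p$-independence: if $\pi'_E$ were isotropic, then Corollary~\ref{Cor:pIndAndAnisPF} and Lemma~\ref{Lem:p-independence} would give some $b_i\in E^p(b_1,\dots,\widehat{b_i},\dots,b_m)=F^p(a,b_1,\dots,\widehat{b_i},\dots,b_m)$, so $\{a,b_1,\dots,b_m\}$ would be $p$-dependent over $F$, contradicting the anisotropy of $\pi$. You instead invoke the defect formula $\iql{\pi'_E}=\tfrac1p\iql{\pf{a}\otimes\pi'}=\tfrac1p\iql{\pi}=0$ from Lemma~\ref{Lemma:p-forms_BasicIsotropy}(ii), after correctly observing that the anisotropy of $\pi$ forces $a\notin F^p$ via Corollary~\ref{Cor:pIndAndAnisPF} (so the hypothesis of that lemma is satisfied). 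This is a legitimate one-line deduction and arguably cleaner, at the cost of invoking the heavier quantitative statement where the paper stays within the elementary $p$-independence toolkit of Subsection~\ref{Subsec:FieldTheory}. Your sketched alternative via $E^p=F^p(a)$ and the tower law is, up to working contrapositively, essentially the paper's argument.
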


\begin{proof}
Let $\pi'=\pf{b_1,\dots,b_m}$. If $\pi'_E$ is isotropic, then, by Corollary~\ref{Cor:pIndAndAnisPF}, $\{b_1,\dots,b_m\}$ is $p$-dependent over $E$; it follows by Lemma~\ref{Lem:p-independence} that we can find $i\in\{1,\dots,m\}$ such that ${b_i\in E^p(b_1,\dots,b_{i-1},b_{i+1},\dots,b_m)}$. Then $b_i\in F^p(a, b_1,\dots,b_{i-1},b_{i+1},\dots,b_m)$, so the set $\{a, b_1,\dots,b_n\}$ is $p$-dependent over $F$, and hence $\pi$ is isotropic by Corollary~\ref{Cor:pIndAndAnisPF}.
\end{proof}

By induction, we get:

\begin{lemma}\label{Lemma:PFinsepsplitting_pforms} 
Let $\pi=\pf{a_1,\dots,a_m}$ be an anisotropic quasi-Pfister form over $F$. We set $E_0=F$ and $E_i=F\bigl(\sqrt[p]{a_1}, \dots, \sqrt[p]{a_i}\bigr)$ for $1\leq i\leq m$. Then $(\pi_{E_i})_{\an}\simeq\pf{a_{i+1},\dots,a_m}$. (For $i=m$ we get the $0$-fold Pfister form $\sqf{1}$.)
\end{lemma}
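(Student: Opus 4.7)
The plan is to prove the statement by induction on $i$, exactly as the author suggests. The base case $i=0$ is immediate since $\pi_{E_0}=\pi$ is anisotropic by assumption. For the inductive step, I would write
\[
\pi_{E_i}\simeq\pf{a_{i+1},\dots,a_m}_{E_i}\ort\iql{\pi_{E_i}}\times\sqf{0},
\]
which by the induction hypothesis holds up to isometry. Extending scalars to $E_{i+1}$ and taking anisotropic parts gives the reduction
\[
(\pi_{E_{i+1}})_{\an}\simeq\bigl(\pf{a_{i+1},\dots,a_m}_{E_{i+1}}\bigr)_{\an},
\]
so it suffices to compute the right-hand side.

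Next I would exploit that $a_{i+1}=(\sqrt[p]{a_{i+1}})^p\in E_{i+1}^{*p}$, which forces $a_{i+1}^k\in E_{i+1}^{*p}$ for every $k$. Hence each diagonal entry of $\pf{a_{i+1}}_{E_{i+1}}=\sqf{1,a_{i+1},\dots,a_{i+1}^{p-1}}$ is a $p$-th power in $E_{i+1}^*$, giving $\pf{a_{i+1}}_{E_{i+1}}\simeq p\times\sqf{1}$. Tensoring with $\pf{a_{i+2},\dots,a_m}$ yields
\[
\pf{a_{i+1},\dots,a_m}_{E_{i+1}}\simeq p\times\pf{a_{i+2},\dots,a_m}_{E_{i+1}}.
\]
Since $p$ copies of a $p$-form have the same anisotropic part as the form itself (the standard identity $x_1^p+\dots+x_p^p=(x_1+\dots+x_p)^p$ in characteristic $p$), I only need to know that $\pf{a_{i+2},\dots,a_m}_{E_{i+1}}$ is anisotropic.

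To conclude I would apply Lemma~\ref{Lem:PFinsep1} over the ground field $E_i$, with the quasi-Pfister forms $\pf{a_{i+1},\dots,a_m}=\pf{a_{i+1}}\otimes\pf{a_{i+2},\dots,a_m}$ playing the roles of $\pi$ and $\pi'$, and with $a=a_{i+1}$. The larger form is anisotropic over $E_i$ by the induction hypothesis, so the lemma gives that $\pf{a_{i+2},\dots,a_m}_{E_{i+1}}$ is anisotropic, completing the step. The case $i=m$ reads $\pf{}_{E_m}=\sqf{1}$ by the convention stated in the definition of quasi-Pfister forms.

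There is no real obstacle here beyond carefully tracking that extending scalars and taking anisotropic parts commute with peeling off the zero summands, and making sure to invoke Lemma~\ref{Lem:PFinsep1} with the correct base field $E_i$ rather than $F$ — the anisotropy hypothesis in Lemma~\ref{Lem:PFinsep1} is exactly what the inductive hypothesis supplies.
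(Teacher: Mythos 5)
Your proof is correct and matches the paper's approach: the paper only writes ``By induction, we get:'' before stating the lemma, and your argument is precisely the fleshed-out induction using Lemma~\ref{Lem:PFinsep1} over the ground field $E_i$ rather than $F$. Your identification of $\pf{a_{i+1},\dots,a_m}_{E_{i+1}}$ with $p\times\pf{a_{i+2},\dots,a_m}_{E_{i+1}}$ and the absorption of the multiplicity via the $E_{i+1}^p$-vector-space structure of the represented values are a sound way to finish the reduction.
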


\begin{example} \label{Ex:fspPF}
Let $\pi\simeq\pf{a_1,\dots,a_n}$ be an anisotropic quasi-Pfister form. Since $(\pi_E)_{\an}$ is again a quasi-Pfister form (and hence its dimension is a $p$-power) for any field extension $E/F$ by \cite[Lemma~2.6]{Scu13}, it follows from Lemma~\ref{Lemma:PFinsepsplitting_pforms} that 
\[\fullsplitpat{\pi}=\{1,p,\dots,p^n\}.\]
\end{example}

\subsection{Full splitting pattern of quasi-Pfister neighbors}

We look at anisotropic $p$-forms of the form $\varphi\simeq\pi\ort d\sigma$, where $\pi$ is a quasi-Pfister form and $\sigma\subseteq\pi$. Note that $\varphi$ is a quasi-Pfister neighbor of $\pi\otimes\pf{d}$.

\begin{lemma}\label{Lemma:IsotropyIndicesSPN_pforms}
Let $\pi$ be a quasi-Pfister form over $F$, $\sigma\subseteq\pi$ and $d\in F^*$ be such that the $p$-form $\varphi\simeq\pi\ort d\sigma$ is anisotropic. Let $E/F$ be a field extension.
\begin{enumerate}
	\item If $d\in D_E(\pi)$, then we have $(\varphi_E)_{\an}\simeq(\pi_E)_{\an}$, and so, in particular, $\iql{\varphi_E}=\iql{\pi_E}+\dim\sigma$.
	\item If $d\notin D_E(\pi)$, then $(\varphi_E)_{\an}\simeq(\pi_E)_{\an}\ort d(\sigma_E)_{\an}$, and so, in particular, $\iql{\varphi_E}=\iql{\pi_E}+\iql{\sigma_E}$.
\end{enumerate}
\end{lemma}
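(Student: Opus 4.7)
The plan is to reduce the claim to analyzing the anisotropic forms $(\pi_E)_{\an}$ and $d(\sigma_E)_{\an}$ separately, using the key fact that $(\pi_E)_{\an}$ is again a quasi-Pfister form (by \cite[Lemma~2.6]{Scu13}), so that $D_E^{*}((\pi_E)_{\an}) = G_E^{*}((\pi_E)_{\an})$ is a multiplicative group. First I would rewrite
\[\varphi_E \simeq \pi_E \ort d\sigma_E \simeq (\pi_E)_{\an} \ort d(\sigma_E)_{\an} \ort (\iql{\pi_E}+\iql{\sigma_E})\times\sqf{0},\]
so the task reduces to computing the anisotropic part of $\rho_E := (\pi_E)_{\an} \ort d(\sigma_E)_{\an}$; the defect formulas then follow from a dimension count. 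I would also record that the inclusion $\sigma \subseteq \pi$ over $F$ gives $D_E(\sigma_E) \subseteq D_E(\pi_E)$, and hence, via Lemma~\ref{Lem:p-subform}, $(\sigma_E)_{\an} \subseteq (\pi_E)_{\an}$.

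For part~(i), assuming $d \in D_E(\pi)$ means $d \in G_E^{*}((\pi_E)_{\an})$, so $d(\pi_E)_{\an} \simeq (\pi_E)_{\an}$. The main point is then that
\[D_E(d\sigma_E) = d \cdot D_E(\sigma_E) \subseteq d \cdot D_E((\pi_E)_{\an}) = D_E(d(\pi_E)_{\an}) = D_E((\pi_E)_{\an}),\]
so $D_E(\rho_E) = D_E((\pi_E)_{\an})$, and Lemma~\ref{Lem:p-subform}(i) applied to an $E^p$-basis yields $(\varphi_E)_{\an} \simeq (\pi_E)_{\an}$. The defect formula is $\dim \varphi_E - \dim(\pi_E)_{\an} = \iql{\pi_E} + \dim \sigma$.

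For part~(ii), I would argue by contradiction to show that $\rho_E$ is already anisotropic. Any hypothetical isotropy vector $(v_1,v_2)$ of $\rho_E$ must have both components nonzero, since each summand is anisotropic; this produces a relation $(\pi_E)_{\an}(v_1) = -d(\sigma_E)_{\an}(v_2)$ with $(\sigma_E)_{\an}(v_2) \in D_E^{*}((\sigma_E)_{\an}) \subseteq D_E^{*}((\pi_E)_{\an})$. Using that $\sqf{-a}\simeq\sqf{a}$ and the multiplicative closure of $D_E^{*}((\pi_E)_{\an})$, we then get $d \in D_E^{*}((\pi_E)_{\an}) = D_E(\pi)\setminus\{0\}$, contradicting the hypothesis. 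Hence $(\varphi_E)_{\an} \simeq \rho_E = (\pi_E)_{\an} \ort d(\sigma_E)_{\an}$, and the defect is $\iql{\pi_E} + \iql{\sigma_E}$ by dimension count.

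The main obstacle is the case (ii) anisotropy argument: one has to leverage the quasi-Pfister multiplicativity of $(\pi_E)_{\an}$ precisely and carefully combine it with the (possibly non-Pfister) $(\sigma_E)_{\an}$ inside it, as well as the harmless identity $\sqf{-a}\simeq\sqf{a}$, to pin down that an isotropy of $\rho_E$ forces $d$ into the value set of $\pi$ over $E$. Everything else is direct bookkeeping using Lemma~\ref{Lem:p-subform} and the fact that $(\pi_E)_{\an}$ is a quasi-Pfister form.
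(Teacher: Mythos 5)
Your proposal is correct and follows essentially the same route as the paper: part~(i) is the same multiplicativity argument via $D^*_E=G^*_E$ for a quasi-Pfister form, and part~(ii) is the same contradiction argument, just unpacked in coordinates rather than phrased via the isotropy of $(\pi_E)_{\an}\otimes\sqf{1,d}$ as the paper does.
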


\begin{proof}
If $d\in D_E(\pi)$, then $\varphi_E\simeq d\pi_E\ort d\sigma_E$, and hence $(\varphi_E)_{\an}\simeq(\pi_E)_{\an}$. 

Now assume $d\notin D_E(\pi)$; if $(\pi_E)_{\an}\ort d(\sigma_E)_{\an}$ were isotropic, then so would be $(\pi_E)_{\an}\otimes\sqf{1,d}$, which would imply $d\in D_E(\pi)$, a contradiction. Thus, the $p$-form $(\pi_E)_{\an}\ort d(\sigma_E)_{\an}$ is anisotropic. As clearly $(\varphi_E)_{\an}\simeq((\pi_E)_{\an}\ort d(\sigma_E)_{\an})_{\an}$, we get $(\varphi_E)_{\an}\simeq(\pi_E)_{\an}\ort d(\sigma_E)_{\an}$.
\end{proof}

\begin{corollary}\label{Cor:fspSPN_pforms}
Let $\pi$ be an $n$-fold quasi-Pfister form over $F$, $\sigma\subseteq\pi$ and $d\in F^*$ be such that the $p$-form $\varphi\simeq\pi\ort d\sigma$ is anisotropic. Then 
\[\fullsplitpat{\varphi}\subseteq\{p^k+\ell~|~0\leq k\leq n, 0\leq \ell\leq \dim\sigma\}.\]
\end{corollary}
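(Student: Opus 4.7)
The plan is to apply Lemma~\ref{Lemma:IsotropyIndicesSPN_pforms} directly to an arbitrary field extension $E/F$ and to combine it with the fact (cited earlier, \cite[Lemma~2.6]{Scu13}) that the anisotropic part of a quasi-Pfister form over any extension is again a quasi-Pfister form, hence has dimension a power of $p$. Since $\pi$ is an $n$-fold quasi-Pfister form, $\dim(\pi_E)_{\an}=p^k$ for some $0\leq k\leq n$ for every extension $E/F$.

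I would then split into the two cases offered by Lemma~\ref{Lemma:IsotropyIndicesSPN_pforms}. If $d\in D_E(\pi)$, the lemma gives $(\varphi_E)_{\an}\simeq(\pi_E)_{\an}$, so $\dim(\varphi_E)_{\an}=p^k=p^k+0$, which clearly lies in the claimed set (with $\ell=0$). If $d\notin D_E(\pi)$, the lemma gives the orthogonal decomposition $(\varphi_E)_{\an}\simeq(\pi_E)_{\an}\ort d(\sigma_E)_{\an}$, hence
\[\dim(\varphi_E)_{\an}=\dim(\pi_E)_{\an}+\dim(\sigma_E)_{\an}=p^k+\ell,\]
where $\ell:=\dim(\sigma_E)_{\an}$ satisfies $0\leq\ell\leq\dim\sigma$ (since dimensions of anisotropic parts cannot exceed the dimension of the original form).

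Since $E/F$ was arbitrary, every element of $\fsp{\varphi}$ has the desired form, which is exactly the statement. There is no real obstacle here: the work has been done in Lemma~\ref{Lemma:IsotropyIndicesSPN_pforms} and in the quasi-Pfister stability result, and this corollary is a straightforward two-line bookkeeping once those tools are in hand.
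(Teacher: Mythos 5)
Your proof is correct and is precisely the argument the paper intends: the corollary is stated without proof immediately after Lemma~\ref{Lemma:IsotropyIndicesSPN_pforms}, and your two-case bookkeeping, combined with the cited fact that the anisotropic part of a quasi-Pfister form over any extension is again a quasi-Pfister form (so $\dim(\pi_E)_{\an}=p^k$ with $0\leq k\leq n$), is exactly what makes the deduction go through.
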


\begin{lemma}\label{Lemma:SplitPatMinSubform_pforms}
Let $\pi\simeq\pf{a_1,\dots,a_n}$ be an anisotropic quasi-Pfister form over $F$ and $\sigma\simeq\sqf{1,a_1,\dots,a_s}$ for some $s\leq n$. Let $E/F$ be a field extension. 
\begin{enumerate}
	\item If $\dim(\sigma_E)_{\an}=\ell$, then $\dim(\pi_E)_{\an}\geq p^{\lceil\log_p\ell\rceil}$.
	\item If $\dim(\pi_E)_{\an}=p^k$, then $\dim(\sigma_E)_{\an}\geq k-n+s+1$.
\end{enumerate}
\end{lemma}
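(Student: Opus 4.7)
The plan is to prove both parts by relating dimensions of anisotropic parts to norm-field degrees, exploiting the quasi-Pfister structure of $\pi_E$.

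For part (i), I would argue directly from the inclusion of forms: since $\sigma \subseteq \pi$, scalar extension gives $\sigma_E \subseteq \pi_E$, hence $D_E(\sigma_E) \subseteq D_E(\pi_E)$. Because the dimension of an anisotropic $p$-form equals the $E^p$-dimension of the space of values it represents, we obtain $\dim(\sigma_E)_{\an} \leq \dim(\pi_E)_{\an}$. As $(\pi_E)_{\an}$ is a quasi-Pfister form (by \cite[Lemma~2.6]{Scu13} as recalled in Section~\ref{Sec:Prel_p-forms}), its dimension is a power of $p$. Combining these two facts gives $\dim(\pi_E)_{\an} \geq p^{\lceil\log_p \ell\rceil}$.

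For part (ii), the idea is to track norm fields instead. By Lemma~\ref{Lemma:NormFieldBasics}(i) we have
\[
N_E(\sigma_E) = E^p(a_1,\dots,a_s), \qquad N_E(\pi_E) = E^p(a_1,\dots,a_n).
\]
Since $(\pi_E)_{\an}$ is quasi-Pfister, its dimension coincides with $\ndeg_E(\pi_E)$, so the hypothesis $\dim(\pi_E)_{\an} = p^k$ translates into $[E^p(a_1,\dots,a_n) : E^p] = p^k$. On the other hand, each successive generator $a_j$ raises the degree by a factor of at most $p$ (as $a_j^p \in F^p \subseteq E^p$), giving $[E^p(a_1,\dots,a_n) : E^p(a_1,\dots,a_s)] \leq p^{n-s}$. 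Multiplicativity of degrees then forces
\[
\ndeg_E(\sigma_E) = [E^p(a_1,\dots,a_s) : E^p] \geq p^{k-n+s}.
\]
Applying Lemma~\ref{Lemma:PrereqForNormForm} to $\sigma_E$ yields $\dim(\sigma_E)_{\an} \geq (k-n+s)+1$.

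The argument is essentially a repackaging of norm-field bookkeeping; the main conceptual ingredient is to identify $\dim(\pi_E)_{\an}$ with $\ndeg_E(\pi_E)$, which is valid because quasi-Pfister forms are their own norm forms. I do not foresee any serious obstacle, only the minor point of handling the degenerate case $k \leq n-s$ in (ii), where the claimed bound $k-n+s+1$ is at most $1$ and hence follows at once from $\dim(\sigma_E)_{\an} \geq 1$.
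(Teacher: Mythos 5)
Your part (i) matches the paper's argument exactly. Your part (ii), however, takes a genuinely different route. The paper invokes \cite[Prop.~5.2]{Hof04} to write $(\pi_E)_\an \simeq \pf{a_{i_1},\dots,a_{i_k}}_E$ explicitly as a quasi-Pfister form built from a subset of the original generators, then counts how many indices of $\{i_1,\dots,i_k\}$ land inside $\{1,\dots,s\}$ and exhibits the resulting $\sqf{1,a_{j_1},\dots,a_{j_\ell}}_E$ as a concrete anisotropic subform of $(\sigma_E)_\an$. You instead keep everything at the level of norm fields: you identify $\dim(\pi_E)_\an$ with $\ndeg_E(\pi_E) = [E^p(a_1,\dots,a_n):E^p]$ (using only that $(\pi_E)_\an$ is quasi-Pfister, i.e.\ \cite[Lemma~2.6]{Scu13}, which the paper already recalls), bound the relative degree over $E^p(a_1,\dots,a_s)$ by $p^{n-s}$, and finish with Lemma~\ref{Lemma:PrereqForNormForm}. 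Your version sidesteps \cite[Prop.~5.2]{Hof04} entirely, replacing a structural theorem about which generators survive with elementary degree multiplicativity; the paper's version yields more, namely an explicit subform of $(\sigma_E)_\an$ witnessing the bound, which can be useful information but is not needed for the stated inequality. You have also correctly flagged the degenerate case $k \leq n-s$, where the bound is trivial. Both arguments are sound.
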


\begin{proof}
Part (i) follows from the facts that $(\sigma_E)_{\an}\subseteq(\pi_E)_{\an}$ and the dimension of $(\pi_E)_{\an}$ is a $p$-power.

To prove part (ii), we apply \cite[Prop.~5.2]{Hof04} on $\pi_E$ to find a subset $\{i_1,\dots,i_k\}\subseteq\{1,\dots,n\}$ such that $(\pi_E)_{\an}\simeq\pf{a_{i_1},\dots,a_{i_k}}_E$. Let 
\[\{j_1,\dots,j_\ell\}=\{i_1,\dots,i_k\}\cap\{1,\dots,s\};\] 
since the set $\{a_{j_1},\dots,a_{j_\ell}\}$ is $p$-independent over $E$, the $p$-form $\sqf{1,a_{j_1},\dots,a_{j_\ell}}_E$ is an anisotropic subform of $(\sigma_E)_{\an}$. In particular, $\dim(\sigma_E)_{\an}\geq \ell+1$. Since 
\[\ell=\abs{ \{i_1,\dots,i_k\}\cap\{1,\dots,s\}}\geq k-(n-s),\] 
the claim follows.
\end{proof}

If, in the situation of Lemma~\ref{Lemma:IsotropyIndicesSPN_pforms}, the form $\sigma$ is minimal, then it is possible to describe the full splitting pattern of $\varphi\simeq\pi\ort d\sigma$.

\begin{theorem}\label{Th:fullsplitpatSPNmin_pforms}
Let $\pi$ be an $n$-fold quasi-Pfister form over $F$, $\sigma$ be a minimal subform of $\pi$ of dimension at least $2$ and $d\in F^*$ be such that the $p$-form $\varphi\simeq\pi\ort d\sigma$ is anisotropic. Then $m\in \fullsplitpat{\varphi}$ if and only if $m=p^k+\ell$ and one of the following holds:
\begin{enumerate}
	\item $0\leq k\leq n$ and $\ell=0$;
	\item $0\leq k\leq n$ and $\max\{1, k-n+\dim\sigma\}\leq \ell\leq \min\{\dim\sigma,p^k\}$.
\end{enumerate}
\end{theorem}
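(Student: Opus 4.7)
My plan is to first normalize the forms. By Lemma~\ref{Lem:PropertiesOfMinimalForms_pforms}(i) together with Lemma~\ref{Lemma:p-bases}, I pick a $p$-basis $\{a_1,\dots,a_s\}$ of $N_F(\sigma)/F$ and extend it to a $p$-basis $\{a_1,\dots,a_n\}$ of $N_F(\pi)/F$, so that $\pi\simeq\pf{a_1,\dots,a_n}$; after absorbing the similarity factor of $\sigma$ into $d$ (legitimate because $G_F(\pi)=D_F(\pi)$ for quasi-Pfister $\pi$), I may further assume $\sigma\simeq\sqf{1,a_1,\dots,a_s}$, where $s=\dim\sigma-1$. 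A direct check shows $d\notin D_F(\pi)$: otherwise $d\pi\simeq\pi$, so $\varphi\simeq\pi\ort\sigma$ would contain the isotropic form $\sqf{1,1}$ (isotropic in characteristic $p$, since $x^p+y^p=(x+y)^p$). Equivalently, $\{a_1,\dots,a_n,d\}$ is $p$-independent over $F$.

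\textbf{Necessary conditions.} Next I apply Corollary~\ref{Cor:fspSPN_pforms} to obtain $\fsp\varphi\subseteq\{p^k+\ell:0\leq k\leq n,\,0\leq\ell\leq\dim\sigma\}$. For each $E/F$, Lemma~\ref{Lemma:IsotropyIndicesSPN_pforms} splits into two subcases: either $d\in D_E(\pi)$ and $\dim(\varphi_E)_\an=p^k=\dim(\pi_E)_\an$ (case (i), $\ell=0$), or $d\notin D_E(\pi)$ and $\dim(\varphi_E)_\an=p^k+\ell$ with $p^k=\dim(\pi_E)_\an$ and $\ell=\dim(\sigma_E)_\an\geq 1$ (case (ii), the lower bound $\ell\geq 1$ holding because $\sqf{1}\subseteq(\sigma_E)_\an$). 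The remaining bound $\ell\leq p^k$ comes from $(\sigma_E)_\an\subseteq(\pi_E)_\an$, and $\ell\geq k-n+\dim\sigma$ is Lemma~\ref{Lemma:SplitPatMinSubform_pforms}(ii).

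\textbf{Sufficient conditions and the main obstacle.} For case (i), I would take $E=F(\sqrt[p]{d},\sqrt[p]{a_1},\dots,\sqrt[p]{a_{n-k}})$; the $p$-independence of $\{d,a_1,\dots,a_{n-k}\}$ together with Lemma~\ref{Lemma:AlgebraicToThepPower} yields $\dim(\pi_E)_\an=p^k$, and $d\in E^p\subseteq D_E(\pi)$ lets Lemma~\ref{Lemma:IsotropyIndicesSPN_pforms}(i) conclude. For case (ii), I would build $E=F(\sqrt[p]{b_1},\dots,\sqrt[p]{b_{n-k}})$ with $b_i$ chosen as monomials in $a_1,\dots,a_n$ that form a $p$-independent set over $F$; since each $b_i\in F^p(a_1,\dots,a_n)=D_F(\pi)$, one has $D_E(\pi)=D_F(\pi)\not\ni d$, and $\dim(\pi_E)_\an=p^k$ automatically. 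The monomials are chosen to tune $\dim(\sigma_E)_\an$ to the prescribed $\ell$: take $s+1-\ell$ singletons $a_{j_t}$ with $j_t\in\{1,\dots,s\}$ (killing $s+1-\ell$ dimensions of $\sigma_E$), all $n-s$ singletons from $\{a_{s+1},\dots,a_n\}$, and --- in the range $\ell>k+1$ --- additionally $\ell-k-1$ non-singleton monomial products built from the remaining $\ell-1$ indices in $\{1,\dots,s\}\setminus\{j_1,\dots,j_{s+1-\ell}\}$. The \emph{main obstacle} will be to verify that $\{1,a_1,\dots,a_s\}$ remains $E^p$-linearly independent whenever non-singleton products are used. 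I would expand any putative relation $\sum c_ia_i=0$ (with $c_i\in E^p$) in the $F^p$-basis $\{a_1^{e_1}\cdots a_n^{e_n}:0\leq e_j\leq p-1\}$ of $F^p(a_1,\dots,a_n)$ and argue by a combinatorial "signature" on the exponent vectors: each $c_ia_i$ contributes only to monomials whose multi-exponent lies in a coset determined by $i$ and by the product structure of $E^p$, and by an appropriate arrangement of the products (e.g., chains of disjoint pairs for $p=2$, or a cyclic family of overlapping products for $p$ odd) these cosets can be made pairwise disjoint, forcing each $c_i=0$ and hence giving $\dim(\sigma_E)_\an=\ell$ exactly.
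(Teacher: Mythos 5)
Your high-level structure matches the paper's: same normalization, same necessary-condition lemmas (Corollary~\ref{Cor:fspSPN_pforms}, Lemma~\ref{Lemma:IsotropyIndicesSPN_pforms}, Lemma~\ref{Lemma:SplitPatMinSubform_pforms}), and the same strategy of realizing each value by a purely inseparable extension of exponent one. Your case~(i) field is a correct (if cosmetically different) variant of the paper's $D_k$. The problem is in case~(ii), where the proof actually has to work.

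First, a concrete bug: for $\ell<k+1$ your recipe adjoins $s+1-\ell$ singletons from $\{a_1,\dots,a_s\}$ plus \emph{all} $n-s$ singletons from $\{a_{s+1},\dots,a_n\}$, i.e.\ $n-\ell+1>n-k$ generators, which over-kills $\pi$ and produces $\dim(\pi_E)_\an=p^{\ell-1}<p^k$. The paper's $E_{k,\ell}=F(\sqrt[p]{a_\ell},\dots,\sqrt[p]{a_{n-(k-\ell)-1}})$ takes only $n-k+\ell-1-s$ upper singletons precisely to avoid this. Second, and more seriously: for $\ell>k+1$ you leave the choice of the $\ell-k-1$ non-singleton monomials unspecified (``built from the remaining $\ell-1$ indices'') and defer the crucial linear-independence check to a vague ``signature on exponent vectors'' with ``chains of disjoint pairs / cyclic families''. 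That is the whole content of this case, and your sketch gives no reason it closes. The paper's argument depends on a very particular choice: monomials $\frac{a^{\lambda_j}}{a_{k+j}}$ with $\lambda_j$ supported on $\{1,\dots,k\}$ and $\sum_i\lambda_j(i)>1$. With that choice one computes $G_{k,\ell}^p(a_1,\dots,a_k)=F^p(a_1,\dots,a_n)$, gets $[G_{k,\ell}:F]=p^{n-k}$ by a clean degree count via Lemma~\ref{Lemma:AlgebraicToThepPower}, concludes $\{a_1,\dots,a_k\}$ is $p$-independent over $G_{k,\ell}$, and reads off $(\pi_{G_{k,\ell}})_\an\simeq\pf{a_1,\dots,a_k}$ and $(\sigma_{G_{k,\ell}})_\an\simeq\sqf{1,a_1,\dots,a_{\ell-1}}$ because $a_{k+j}\equiv a^{\lambda_j}\pmod{G_{k,\ell}^{*p}}$. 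None of that structure is present in your proposal, and without it the independence verification you flag as the ``main obstacle'' remains genuinely open. So: right roadmap, but the decisive construction and its verification are missing.
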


\begin{proof}
If $x\in D_F(\sigma)$, then $x^{-1}\in G_F(\pi)$, and hence $x^{-1}\varphi\simeq\pi\ort d(x^{-1}\sigma)$ with $1\in D_F(x^{-1}\sigma)$; therefore, we can assume without loss of generality that $1\in D_F(\sigma)$. Let $\sigma\simeq\sqf{1,a_1,\dots, a_s}$ for some $1\leq s\leq n$; since $\sigma$ is assumed to be minimal, the set $\{a_1,\dots,a_s\}$ is $p$-independent by part (i) of Lemma~\ref{Lem:PropertiesOfMinimalForms_pforms}, and hence (by Lemma~\ref{Lemma:p-bases}) it can be extended to a $p$-independent set $\{a_1,\dots,a_n\}$ such that $N_F(\pi)=F^p(a_1,\dots, a_n)$. Then $\pi\simeq\pf{a_1,\dots,a_n}$ and
\[\varphi\simeq\pf{a_1\dots,a_n}\ort d\sqf{1,a_1,\dots,a_s}.\]

By Corollary~\ref{Cor:fspSPN_pforms}, we have
\[\fullsplitpat{\varphi}\subseteq\{p^k+\ell~|~0\leq k\leq n, 0\leq \ell\leq s+1\}.\]
Furthermore, if $\ell\geq 1$ (i.e., if $\sigma_E$ does not disappear completely in $(\varphi_E)_{\an}$), then by Lemma~\ref{Lemma:SplitPatMinSubform_pforms}, it must hold $k-n+s+1\leq \ell\leq p^k$ for any tuple $(k,\ell)$ such that $p^k+\ell\in\fullsplitpat{\varphi}$. Therefore, $\fullsplitpat{\varphi}\subseteq I$ where
\begin{multline*}
I=\{p^k+\ell~|~0\leq k\leq n, \max\{1, k-n+s+1\}\leq \ell\leq \min\{s+1,p^k\}\}\\
\cup\{p^k~|~0\leq k\leq n\}.
\end{multline*}
So we only need to prove that all values in $I$ are realizable. We define
\begin{align*}
	I_0&=\{(k,0)~|~0\leq k\leq n\},\\
	I_1&=\{(k,\ell)~|~0\leq k\leq n, \max\{1, k-n+s+1\}\leq \ell\leq\min\{s+1, k+1\}\},\\
	I_2&=\{(k,\ell)~|~0\leq k\leq n, k+1<\ell\leq \min\{s+1,p^k\}\};
\end{align*}
then $I=\{p^k+\ell~|~(k,\ell)\in I_0\cup I_1\cup I_2\}$.

If $(k,\ell)=(k,0)\in I_0$, then set $D_k=F(\sqrt[p]{a_{k+1}},\dots,\sqrt[p]{a_n}, \sqrt[p]{d})$. We get
\[(\varphi_{D_k})_{\an}\simeq\pf{a_1,\dots,a_k}_{D_k},\]
i.e., $\dim(\varphi_{D_k})_{\an}=p^k$.

To cover the values in the set $I_1$, we define $E_{n,s+1}=F$ and 
\[E_{k,\ell}=F(\sqrt[p]{a_\ell},\dots,\sqrt[p]{a_{n-(k-\ell)-1}})\]
for all tuples $(k,\ell)\in I_1$ such that $(k,\ell)\neq (n,s+1)$; note that since $k\leq n-1$ for such tuples, we have $\ell\leq n-(k-\ell)-1$. Moreover, the condition $\ell\leq k+1$ can be rewritten as $n-(k-\ell)-1 \leq n$. Therefore, $E_{k,\ell}$ is well-defined. Moreover, the inequality $k-n+s+1\leq \ell$ ensures that $s\leq n-(k-\ell)-1$, and hence $(\sigma_{E_{k,\ell}})_\an\simeq\sqf{1,a_1,\dots,a_{\ell-1}}_{E_{k,\ell}}$.
Thus, for any $(k,\ell)\in I_1$, we get 
\[(\varphi_{E_{k,\ell}})_{\an}\simeq(\pf{a_1,\dots,a_{\ell-1}}\otimes\pf{a_{n-(k-\ell)},\dots,a_n}\ort d\sqf{1,a_1,\dots,a_{\ell-1}})_{E_{k,\ell}}\]
and so $\dim(\varphi_{E_{k,\ell}})_{\an}=p^k+\ell$.

Finally, let $(k,\ell)\in I_2$. We denote $S_k=\{\lambda:\{1,\dots,k\}\rightarrow\{0,\dots,p-1\}\}$ and write $a^\lambda=\prod_{i=1}^ka_i^{\lambda(i)}$ for any $\lambda\in S_k$. Furthermore, we pick a subset $\mathcal{L}_{k,\ell}\subseteq S_k$ such that $\abs{\mathcal{L}_{k,l}}=\ell-k-1$ and $\sum_{i=1}^k\lambda(i)>1$ for each $\lambda\in\mathcal{L}_{k,\ell}$; note that this is possible since $\ell-k-1\leq p^k-k-1=\abs{\{\lambda\in S_k~|~\sum_{i=1}^k\lambda(i)>1\}}$. Let $\mathcal{L}_{k,\ell}=\{\lambda_1,\dots,\lambda_{\ell-k-1}\}$. We define
\[G_{k,\ell}=F\left(\sqrt[p]{\frac{a^{\lambda_1}}{a_{k+1}}}, \dots, \sqrt[p]{\frac{a^{\lambda_{\ell-k-1}}}{a_{\ell-1}}}, \sqrt[p]{a_\ell},\dots,\sqrt[p]{a_n}\right)\]
(the field depends on the choice of $\mathcal{L}_{k,\ell}$ and the ordering of its elements). Then $[G_{k,\ell}:F]\leq p^{n-k}$, and
\[
G_{k,\ell}^p(a_1,\dots,a_k)
=F^p\left(a_1,\dots,a_k, \frac{a^{\lambda_1}}{a_{k+1}}, \dots, \frac{a^{\lambda_{\ell-k-1}}}{a_{\ell-1}}, a_\ell, \dots,a_n\right)
=F^p(a_1,\dots,a_n)
\]
because $a^{\lambda_j}\in F^p(a_1,\dots,a_k)$ for all $1\leq j\leq \ell-k-1$. As $[G^p_{k,\ell}:F^p]=[G_{k,\ell}:F]$ by Lemma~\ref{Lemma:AlgebraicToThepPower}, we get
\[p^n=[F^p(a_1,\dots,a_n):F^p]=[G_{k,\ell}^p(a_1,\dots,a_k):G_{k,\ell}^p]\cdot[G^p_{k,\ell}:F^p]\leq p^k\cdot p^{n-k}.\]
It follows that $[G_{k,\ell}:F]=p^{n-k}$ and $[G_{k,\ell}^p(a_1,\dots,a_k):G_{k,\ell}^p]=p^k$; hence, in particular, the set $\{a_1,\dots,a_k\}$ is $p$-independent over $G_{k,\ell}$, which means that the $p$-form $\pf{a_1,\dots,a_k}$ is anisotropic over $G_{k,\ell}$. Moreover, note that we have $a^{\lambda_j}\equiv a_{k+j} \mod G_{k,\ell}^{*p}$ for any $1\leq j\leq \ell-k-1$; in particular $a_{k+1},\dots,a_{\ell-1}\in D_{G_{k,\ell}}(\pf{a_1,\dots,a_k})$. Therefore, 
\begin{align*}
(\pi_{G_{k,\ell}})_{\an}&\simeq\pf{a_1,\dots,a_k}_{G_{k,\ell}},\\
(\sigma_{G_{k,\ell}})_{\an}&\simeq\sqf{1,a_1,\dots,a_{\ell-1}}_{G_{k,\ell}},
\end{align*}
where the anisotropy of $\sqf{1,a_1,\dots,a_{\ell-1}}_{G_{k,\ell}}$ follows from
\[\sqf{1,a_1,\dots,a_{\ell-1}}_{G_{k,\ell}}\simeq\bigl(\sqf{1,a_1,\dots,a_k}\ort\Ort\limits_{j=1}^{\ell-k-1}\sqf{a^{\lambda_j}}\bigr)_{G_{k,\ell}}\subseteq\pf{a_1,\dots,a_k}_{G_{k,\ell}}.\]
Since $d\notin F^p(a_1,\dots,a_n)=D_{G_{k,\ell}}(\pi)$, we get by Lemma~\ref{Lemma:IsotropyIndicesSPN_pforms} that
\[(\varphi_{G_{k,\ell}})_{\an}\simeq\left(\pf{a_1,\dots,a_k}\ort d\sqf{1,a_1,\dots,a_{\ell-1}}\right)_{G_{k,\ell}}; \]
in particular $\dim(\varphi_{G_{k,\ell}})_{\an}=p^k+\ell$.
\end{proof}

\begin{remark}
Note that we used in the proof of Theorem~\ref{Th:fullsplitpatSPNmin_pforms} only purely inseparable field extensions of exponent one. Thus, for any $p$-form $\varphi$ satisfying the conditions of that theorem, we have $\pisp{\varphi}=\fsp{\varphi}$.
\end{remark}

The following example illustrates both the result and the proof of Theorem~\ref{Th:fullsplitpatSPNmin_pforms}.

\begin{example} \label{Ex:FullSplitPattSPN_pforms}
Let $a_1, a_2, a_3, a_4, d\in F^*$ be $p$-independent and
\[\varphi\simeq\pf{a_1,a_2,a_3, a_4}\ort d\sqf{1,a_1,a_2, a_3}.\]
Then we have by Theorem~\ref{Th:fullsplitpatSPNmin_pforms}
\begin{align*}
\fullsplitpat{\varphi}=&\{p^0, p^1,p^2,p^3,p^4\}\\
&\cup\{p^0+1,p^1+1,p^1+2,p^2+2,p^2+3,p^2+4, p^3+3,p^3+4\}\\
&\cup\{p^1+3 ~|~\text{if } p\geq3\}\cup\{p^1+4~|~\text{if } p\geq4\}.
\end{align*}
Table~\ref{Tab:FullSplitPattSPN_pforms} provides the fields used in the proof of Theorem~\ref{Th:fullsplitpatSPNmin_pforms} and the obtained $p$-forms for the case $p\geq4$.
\end{example}

\newgeometry{left=3cm,right=2cm, bottom=2cm, top=2cm}
\begin{landscape}
\begin{table}%
\begin{adjustbox}{max width=24.5cm}
\renewcommand{\arraystretch}{2}
\begin{tabular}{|c||c|c|c|c|c|}
\hline
$\dim$&0&1&2&3&4\\ \hline\hline
\multirow{2}{*}{$p^0$}
		& $D_0=F(\sqrt[p]{a_1},\sqrt[p]{a_2},\sqrt[p]{a_3},\sqrt[p]{a_4},\sqrt[p]{d})$ & $E_{0,1}=F(\sqrt[p]{a_1},\sqrt[p]{a_2},\sqrt[p]{a_3},\sqrt[p]{a_4})$ & \multirow{2}{*}{X} & \multirow{2}{*}{X} & \multirow{2}{*}{X}\\ 
		&	$\sqf{1}$ &$\sqf{1}\ort d\sqf{1}$&&&\\ \hline
\multirow{2}{*}{$p^1$}
		& $D_1=F(\sqrt[p]{a_1},\sqrt[p]{a_2},\sqrt[p]{a_3},\sqrt[p]{d})$ & $E_{1,1}=F(\sqrt[p]{a_1},\sqrt[p]{a_2},\sqrt[p]{a_3})$ & $E_{1,2}=F(\sqrt[p]{a_2},\sqrt[p]{a_3},\sqrt[p]{a_4})$ & $G_{1,3}=F\bigl(\sqrt[p]{\frac{a_1^2}{a_2}},\sqrt[p]{a_3},\sqrt[p]{a_4}\bigr)$  & $G_{1,4}=F\bigl(\sqrt[p]{\frac{a_1^2}{a_2}}, \sqrt[p]{\frac{a_1^3}{a_3}}, \sqrt[p]{a_4}\bigr)$ \\ 
		& $\pf{a_4}$ &$\pf{a_4}\ort d\sqf{1}$&$\pf{a_1}\ort d\sqf{1,a_1}$ & $\pf{a_1}\ort d\sqf{1,a_1,a_1^2}$ & $\pf{a_1}\ort d\sqf{1,a_1,a_1^2, a_1^3}$ \\ \hline
\multirow{2}{*}{$p^2$}
		& $D_2=F(\sqrt[p]{a_1},\sqrt[p]{a_2},\sqrt[p]{d})$ & \multirow{2}{*}{X} & $E_{2,2}=F(\sqrt[p]{a_2},\sqrt[p]{a_3})$ & $E_{2,3}=F(\sqrt[p]{a_3},\sqrt[p]{a_4})$ & $G_{2,4}=F\bigl(\sqrt[p]{\frac{a_1a_2}{a_3}}, \sqrt[p]{a_4}\bigr)$ \\ 
		& $\pf{a_3,a_4}$ & & $\pf{a_1,a_4}\ort d\sqf{1,a_1}$ & $\pf{a_1,a_2}\ort d\sqf{1,a_1,a_2}$ & $\pf{a_1,a_2}\ort d\sqf{1,a_1,a_2,a_1a_2}$\\ \hline
\multirow{2}{*}{$p^3$}
		& $D_3=F(\sqrt[p]{a_1},\sqrt[p]{d})$ & \multirow{2}{*}{X} & \multirow{2}{*}{X} & $E_{3,3}=F(\sqrt[p]{a_3})$ & $E_{3,4}=F(\sqrt[p]{a_4})$ \\ 
		& $\pf{a_2,a_3,a_4}$ & & & $\pf{a_1,a_2,a_3}\ort d\sqf{1,a_1,a_2}$ & $\pf{a_1,a_2,a_3}\ort d\sqf{1,a_1,a_2,a_3}$\\ \hline
\multirow{2}{*}{$p^4$}
		& $D_4=F(\sqrt[p]{d})$ & \multirow{2}{*}{X} & \multirow{2}{*}{X} & \multirow{2}{*}{X} & $E_{4,4}=F$ \\ 
		& $\pf{a_1,a_2,a_3,a_4}$ & & & & $\pf{a_1,a_2,a_3,a_4}\ort d\sqf{1,a_1,a_2,a_3}$\\ \hline
\end{tabular}
\end{adjustbox}
\vspace{4mm}
\caption{Splitting of the $p$-form $\pf{a_1,a_2,a_3,a_4}\ort d\sqf{1,a_1,a_2,a_3}$ in the case $p\geq4$ (see Example~\ref{Ex:FullSplitPattSPN_pforms}).}
\label{Tab:FullSplitPattSPN_pforms}
\end{table}
\end{landscape}
\restoregeometry
\newpage
\printbibliography

\end{document}